\theoremstyle{plain}
\newtheorem{theorem}{Theorem}[section]
\newtheorem*{theorem*}{Theorem}
\newtheorem{lemma}[theorem]{Lemma}
\newtheorem{corollary}[theorem]{Corollary}
\newtheorem{proposition}[theorem]{Proposition}
\theoremstyle{definition}
\theoremstyle{remark}
\newtheorem{remark}[theorem]{Remark}
\numberwithin{equation}{section}
\def\N{\ensuremath{\mathbb{N}}}
\def\R{\ensuremath{\mathbb{R}}}
\def\Z{\ensuremath{\mathbb{Z}}}
\def\ep{\varepsilon}
\def\E{\ensuremath{\mathbf{E}}}
\def\P{\ensuremath{\mathbf{P}}}
\def\F{\ensuremath{\mathcal{F}}}
\def\Ind{\ensuremath{\mathbbm{1}}}
\def\to{\rightarrow}
\def\tand{\ensuremath{\text{ and }}}
\def\tas{\ensuremath{\text{ as }}}
\newcommand{\dd}{\mathrm{d}}
\def\Ai{\mathrm{Ai}}
\author{\textsc{Pascal Maillard}\thanks{
Département de Mathématiques, Université Paris-Sud, 91405 Orsay Cedex, France. 
Partially supported by a grant from the Israel Science Foundation}
\and \textsc{Ofer Zeitouni} \thanks{Department of Mathematics, The Weizmann Institute of Science, POB 26, Rehovot 76100, Israel
and Courant Institute, New-York University. Partially supported by a grant from the Israel Science Foundation and the Henri Taubman professorial 
chair at the Weizmann Institute}}
\title{Slowdown in branching Brownian motion
with inhomogeneous variance}
\date{February 2, 2014. Revised March 6, 2015}
\begin{document}

\maketitle

\begin{abstract} We consider the distribution of the maximum $M_T$
of branching Brownian motion with time-inhomogeneous 
variance of the form $\sigma^2(t/T)$,
  where $\sigma(\cdot)$ is a strictly decreasing
function. This corresponds to the study of the time-inhomogeneous
Fisher--Kolmogorov-Petrovskii-Piskunov (F-KPP)
equation $
F_t(x,t)=\sigma^2(1-t/T)F_{xx}
(x,t)/2+
g(F(x,t))$, for appropriate nonlinearities $g(\cdot)$.
Fang and Zeitouni (2012) showed that 
$M_T-v_\sigma T$ is negative of 
order $T^{1/3}$, where $v_\sigma=\int_0^1 \sigma(s)ds$. In this paper, 
we show the existence of a function $m'_T$, such that $M_T-m'_T$ converges in law, as $T\to\infty$. Furthermore,
$m'_T=v_\sigma T - w_\sigma T^{1/3} - \sigma(1)\log T + O(1)$ 
with $w_\sigma = 
2^{-1/3}\alpha_1 \int_0^1 \sigma(s)^{1/3} |\sigma'(s)|^{2/3}\,\dd s$. Here,
$-\alpha_1=-2.33811...$ is the largest zero of the Airy function $\Ai$. The proof uses a mixture of probabilistic and analytic arguments.
\end{abstract}

\renewcommand{\abstractname}{R\'esum\'e}
\begin{abstract}
 Nous \'etudions la loi du maximum $M_T$ d'un mouvement brownien branchant avec une variance inhomog\`ene en temps de la form $\sigma^2(t/T)$, o\`u $\sigma(\cdot)$ est une fonction strictement d\'ecroissante. Ceci correspond \`a \'etudier l'\'equation Fisher--Kolmogorov-Petrovskii-Piskunov (F--KPP) inhomog\`ene en temps, $F_t(x,t)=\sigma^2(1-t/T)F_{xx}
(x,t)/2+
g(F(x,t))$, pour des nonlin\'earit\'es $g(\cdot)$ appropri\'ees. Fang et Zeitouni (2012) ont montr\'e que $M_T-v_\sigma T$ est negatif de l'ordre $T^{1/3}$, o\`u $v_\sigma=\int_0^1 \sigma(s)ds$. Dans cet article, nous montrons l'existence d'une fonction $m'_T$ telle que $M_T-m'_T$ converge en loi quand $T\to\infty$. De plus, $m'_T=v_\sigma T - w_\sigma T^{1/3} - \sigma(1)\log T + O(1)$ avec $w_\sigma = 
2^{-1/3}\alpha_1 \int_0^1 \sigma(s)^{1/3} |\sigma'(s)|^{2/3}\,\dd s$. Ici, $-\alpha_1=-2.33811...$ est la plus grande racine de la fonction d'Airy $\Ai$. La d\'emonstration repose sur un m\'elange d'arguments probabilistes et analytiques.
\end{abstract}

\section{Introduction}
The classical branching Brownian motion (BBM)
model in $\mathbb{R}$
can be described probabilistically as follows. Fix a law 
$\mu$ of finite variance on $[2,\infty)\cap \Z$. At time $t=0$,
one particle exists and is located at the origin. This particle
starts performing standard Brownian motion on the real line, up to an exponentially distributed random
time, with parameter $\beta_0 = (2(\E_\mu[L]-1))^{-1}$ (that is,
branching occurs at rate $\beta_0$).
At that time, the particle instantaneously splits into a random number 
$L\geq 2$ of
independent
particles, and those start afresh performing Brownian motion until their
(independent) exponential clocks ring. There is an extensive literature on this model and its discrete analog, the branching random walk, in particular concerning the position of the right-most particle (see e.g.\ \cite{M75,Bramson78,Bramson83,DS,Roberts,Aidekon}). In order to state the main result, introduce the F-KPP travelling wave equation
\begin{equation}
\label{eq:fkpp}
\phi:\R\to(0,1)\ \text{increasing},\quad \tfrac 1 2 \phi'' +
\phi'+\beta_{0}(\E_\mu[\phi^L]-\phi) = 0,
\quad\phi(-\infty)=0,\ \phi(+\infty)=1.
\end{equation}
One has the following theorem:
\begin{theorem*}[Bramson \cite{Bramson83}]
Let $M_t$ denote the position of the right-most particle at time $t$ in branching Brownian motion as defined above. Then there exists a solution $\phi$ to \eqref{eq:fkpp}, such that for all $x\in\R$,
\[
\P(M_t \le t-\tfrac 3 2 \log t + x) \to \phi(x),\quad\tas t\to\infty.
\]
\end{theorem*}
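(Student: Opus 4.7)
The plan is to use the classical McKean correspondence with the F-KPP equation. Setting $u(t,x):=\P(M_t\le x)$ and conditioning on the time of the first branching event, one checks that $u$ solves
\[
\partial_t u = \tfrac12 \partial_{xx} u + \beta_{0}(\E_\mu[u^L]-u),\qquad u(0,x)=\Ind_{x\ge 0},
\]
so the theorem is equivalent to the PDE convergence $u(t,m_t+\cdot)\to \phi$ for some traveling wave $\phi$ of \eqref{eq:fkpp}, where $m_t=t-\tfrac32\log t$. With the normalization $\beta_0=(2(\E_\mu[L]-1))^{-1}$, the critical wave speed is $c_\ast=1$; existence of a monotone traveling wave at this speed and uniqueness up to translation are classical Kolmogorov--Petrovsky--Piskunov results that I would invoke.

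The heart of the argument is the sharp centering. Writing $v:=1-u$, the linearization around $v=0$ is $\partial_t v=\tfrac12 \partial_{xx} v+\tfrac12 v$; the correct heuristic is that the nonlinear saturation of $v$ near the front is captured by solving the linear equation with an absorbing boundary along the line $\{x=s\}$. For the Heaviside initial datum the absorbed kernel -- equivalently, the many-to-one representation as a Brownian bridge from $0$ to $m_t+x$ killed below the line $s\mapsto s$ -- yields $\E[\#\{\text{particles at time }t\text{ above }m_t+x\}]\to Ce^{-x}$. The $-\tfrac32\log t$ correction emerges from exactly two factors: a Gaussian density factor of order $t^{-1/2}$ and a ballot / Sparre--Andersen probability factor of order $t^{-1}$ from the absorbing barrier. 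Markov's inequality then yields the tightness upper bound $\P(M_t>m_t+x)\le Ce^{-x}(1+o(1))$.

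For the matching lower bound and the identification of subsequential limits as traveling waves, I would run a truncated second-moment argument on BBM: after conditioning on the ballot event, the pair-correlation of the number of particles above $m_t+x$ is controlled by showing that the dominant contribution to pairs comes from common-ancestor times which are either $O(1)$ or $t-O(1)$, producing via Paley--Zygmund a matching lower bound, hence tightness of $\{M_t-m_t\}$. Any weak subsequential limit then satisfies a functional fixed-point relation inherited from the branching identity of BBM at macroscopic times; through the McKean correspondence this forces it to be a monotone traveling wave of \eqref{eq:fkpp}, and uniqueness up to translation identifies all limits with a single translate of $\phi$ (which can be absorbed into the choice of $\phi$).

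The main obstacle is the quantitative second-moment control. At the critical speed, pair correlations are delicate because the ballot constraint pushes particles into an atypically narrow tube, and naive second-moment bounds diverge. Both Bramson's original PDE-comparison argument (sandwiching $u$ between sub- and super-solutions matched to the absorbed linear equation in the region $x\ge \tfrac12 t$) and the modern probabilistic counterpart based on spine decompositions must implement the early-vs-late genealogical dichotomy at precisely the precision needed to recover the exact coefficient $3/2$; anything coarser loses the correct log correction.
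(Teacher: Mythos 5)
The paper does not prove this theorem: it is quoted as a background result and cited to Bramson~\cite{Bramson83}, so there is no in-paper argument for you to recover.

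Judged on its own terms, your sketch correctly sets up the McKean correspondence, identifies the critical wave speed $c_*=1$ under the normalization $\beta_0=(2(\E_\mu[L]-1))^{-1}$, and gives the right heuristic accounting of the $\tfrac32\log t$ correction as the cancellation of the $t^{3/2}$ coming from $e^{-m_t}$ against $t^{-1/2}$ (Gaussian density) and $t^{-1}$ (barrier/ballot constraint). Your plan --- a truncated first moment for the tail upper bound, a truncated second moment plus Paley--Zygmund for the matching lower bound, then identification of subsequential limits --- is the outline of the modern probabilistic route (as in Roberts~\cite{Roberts} and A\"{\i}d\'ekon~\cite{Aidekon}), whereas Bramson's original argument is analytic: a comparison-principle proof for the FKPP equation with sub- and super-solutions built from the Feynman--Kac representation and the absorbed linear equation you allude to.

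The genuine gap is the last step. Tightness of $\{M_t-m_t\}$ does not give convergence in distribution; one must show that all subsequential limits of $u(t,m_t+\cdot)$ coincide. Your appeal to ``a functional fixed-point relation inherited from the branching identity'' plus ``uniqueness of traveling waves'' is circular as stated: to invoke uniqueness you must first show that a given subsequential limit \emph{is} a traveling wave, and that is where the work lies. Bramson proves it by uniform-in-time comparison of $u$ with retracted and advanced translates of a reference solution; the probabilistic proofs pass instead through the Lalley--Sellke representation of the limit in terms of the derivative martingale $D_\infty$ --- which, notably, is exactly the machinery the present paper invokes in Section~5 and Appendix~B to identify the limit law in its time-inhomogeneous setting. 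Without one of these identification arguments, your sketch establishes tightness and a two-sided tail bound but not the asserted convergence to a fixed travelling wave $\phi$.
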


We discuss in this paper a variant of the BBM
model, first introduced in \cite{DS}, where the motion of the
particle(s) is controlled by a time-inhomogeneous variance. More precisely, 
let $\sigma\in C^2([0,1])$ be a strictly decreasing function with
$\sigma(1)>0$ and $\inf_{t\in [0,1]} |\sigma'(t)|>0$. 
We assume that the variance of the Brownian motions at time 
$t\in [0,T]$ is given by $\sigma^2(t/T)$. 

Let ${N(t)}, t\in [0,T]$
denote the collection of particles
alive at time $t$
and for any particle $v\in N(t)$, let $X_v(s), s\in [0,1]$
denote the  trajectory performed by the particle and its ancestors.
Then $M_t
=\max_{u\in  N(t)} X_u(t)$ 
denotes the location of the rightmost
particle at time $t$. The cumulative distribution function of $M_T$ is $F(\cdot,T)$, where $F(x,t)$ is the solution of the 
time-inhomogeneous Fisher--Kolmogorov-Petrovskii-Piskunov (F--KPP)
equation
\begin{eqnarray}
  \label{eq-FKPP}
  \frac{\partial F}{\partial t}(x,t)&=&\frac{\sigma^2(1-t/T)}
2 \frac{\partial^2 F}{\partial^2 x}
(x,t)+
\beta_{0}(\E_\mu[F(x,t)^L]-F(x,t))
\,, t\in [0,T], x\in \mathbb{R}\nonumber\\
F(x,0)&=&{\bf 1}_{x\geq 0}\,.
\end{eqnarray}
See \cite{M75} for this probabilistic interpretation of the F--KPP equation in
the time homogeneous case.

In \cite{FZ12}, 
the authors prove the following.
\begin{theorem*}[Fang, Zeitouni \cite{FZ12}]
  \label{theo-FZ}
  There exist constants $C,C'>0$ so that
  \begin{equation}
    \label{eq-FZ1}
    -C\leq \liminf_{T\to\infty} \frac{M_T-v_\sigma T}{T^{1/3}}
    \leq \limsup_{T\to\infty} \frac{M_T-v_\sigma T}{T^{1/3}}
    \leq -C'<0\,,
  \end{equation}
  where $v_\sigma = \int_0^1\sigma(s) ds$.
\end{theorem*}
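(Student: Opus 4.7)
The proof splits into matching upper and lower bounds on $M_T - v_\sigma T$, both of order $T^{1/3}$, using the many-to-one lemma of BBM to reduce to Brownian-motion computations with time-inhomogeneous variance profile $\sigma^2(\cdot/T)$.

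For the upper bound, the plan is a first-moment argument with intermediate-time barrier constraints. Introduce the ``critical trajectory'' $\gamma^*(t) = \int_0^t \sigma(s/T)\,\dd s$ (concave by strict monotonicity of $\sigma$, with $\gamma^*(T) = v_\sigma T$) and partition $[0, T]$ into $k(T) \to \infty$ subintervals via grid $\{t_i\}$. By many-to-one and a Gaussian tail bound at each intermediate $t_i$, a union bound shows that with high probability no particle ever exceeds $\gamma^*(t_i) + O(\log T)$. On this event, any particle attaining $X_u(T) \geq v_\sigma T - C'T^{1/3}$ has a trajectory confined under this barrier, and by many-to-one the expected number of such particles equals $e^{T/2}$ times the probability of the corresponding path event for a BM with variance profile $\sigma^2(\cdot/T)$. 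A Girsanov change of drift centering $B$ around $\gamma^*$, followed by a time-change to a standard BM on $[0, T s_2]$ with $s_2 = \int_0^1 \sigma^2$, converts this into a Brownian-tube problem. The $T^{1/3}$ scale is expected to emerge from matching Brownian fluctuations of scale $\sqrt T$ against a parabolic cost of curvature $O(1/T)$ arising from the deviation of $\gamma^*$ from its chord (and its interaction with the Girsanov density); one concludes that the first moment tends to $0$ for sufficiently large $C'$.

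For the lower bound, the plan is a truncated second-moment method applied to
\[
\mathcal{N}(C) = \#\{u \in N(T) : X_u(T) \in [v_\sigma T - CT^{1/3}, v_\sigma T - CT^{1/3} + 1],\ X_u(t_i) \leq \gamma^*(t_i) + O(\log T)\ \forall i\}.
\]
For $C$ large, the first moment $\E[\mathcal{N}(C)]$ is bounded below by the same Brownian computation as in the upper bound, now in the regime where the constraint is comfortable. The second moment is controlled via the many-to-two formula, which expresses $\E[\mathcal{N}(C)^2]$ as an integral over the time $s$ of the most recent common ancestor of a pair. Contributions with $s$ in the bulk $(\varepsilon T, (1-\varepsilon)T)$ are negligible compared to $(\E[\mathcal{N}(C)])^2$ because forcing a common ancestor in the interior constrains both descendants' paths to lie in a narrow Brownian tube of exponential cost. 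Paley-Zygmund then yields $\P(\mathcal{N}(C) > 0) > c > 0$, which a standard re-seeding / zero-one argument boosts to high probability.

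The main obstacle is the sharp estimation of the Brownian tube probabilities in the time-inhomogeneous setting. Unlike the classical Bramson case (constant $\sigma$), here the effective barrier after the Girsanov tilt depends non-trivially on $\sigma$, and the $T^{1/3}$ scaling must be extracted uniformly over the profile; this relies on a parabolic approximation near the pinch point of the barrier, with the hypothesis $\inf|\sigma'| > 0$ ensuring the parabolic geometry is non-degenerate. The underlying mechanism is that $T^{1/3}$ is the critical scale at which Brownian fluctuations and curvature balance, yielding the Airy-function-type scaling familiar from related constrained-Brownian problems.
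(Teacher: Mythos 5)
The statement is quoted in the paper from Fang--Zeitouni \cite{FZ12}; the paper does not reprove it directly but rather establishes the stronger Theorem~\ref{th:1}, using a \emph{shifted} barrier $\gamma_T(t)=Tv(t/T)-T^{1/3}w(t/T)$, a first-hitting-time estimate (Lemma~\ref{lem:curve}), and an Airy-type PDE analysis (Proposition~\ref{prop:fund_estimate}). Your overall architecture (first/second moment under a concave barrier $\gamma^*(t)=\int_0^t\sigma(s/T)\,\dd s=Tv(t/T)$, Girsanov tilt, $T^{1/3}$ emerging from curvature of order $1/T$) is the right one, but the central step of the upper bound is not correct as written.

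You assert that ``by many-to-one and a Gaussian tail bound at each intermediate $t_i$, a union bound shows that with high probability no particle ever exceeds $\gamma^*(t_i)+O(\log T)$.'' This step fails: at a fixed intermediate time $t=uT$ with $0<u<1$, the many-to-one formula gives an expected count of particles above $\gamma^*(t)$ equal (up to polynomial factors) to
\[
e^{t/2}\,P\bigl(B_t>\gamma^*(t)\bigr)\ \approx\ \exp\!\Bigl(T\Bigl(\tfrac{u}{2}-\tfrac{(\int_0^u\sigma)^2}{2\int_0^u\sigma^2}\Bigr)\Bigr),
\]
and by Cauchy--Schwarz $(\int_0^u\sigma)^2\le u\int_0^u\sigma^2$, with \emph{strict} inequality when $\sigma$ is not constant. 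Hence this expectation is of order $e^{cT}$, not small, and a union bound over the grid is vacuous. The conclusion you want is still true, but only because any particle at $\gamma^*(t)$ at time $t$ must have had ancestors tracking the curve at all earlier times; it is precisely this ``ancestral constraint,'' not a pointwise Gaussian tail, that kills the first moment. The correct device is a stopping-time / first-hitting estimate as in Lemma~\ref{lem:curve}: one bounds the expected number of particles that hit the curve for the \emph{first} time, so that the Girsanov density accumulated along the whole pre-hitting path is present in the computation and produces the needed cancellation (see \eqref{eq:145}).

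A secondary inaccuracy: after centring on $\gamma^*$ via Girsanov, the decisive object is not a two-sided ``Brownian tube'' between parabolic walls, but a Brownian motion with a \emph{one-sided} barrier (stay below a level $O(1)$) together with a \emph{linear killing potential} of slope $\Theta(1/T)$ arising from integrating by parts the Girsanov martingale against the decreasing $\sigma$. The $T^{1/3}$ comes from the principal eigenvalue $\Theta(T^{-2/3})$ of the associated Airy operator on the half-line, integrated over the time horizon $T$; the paper's shifted barrier $\gamma_T$ is designed exactly to cancel this eigenvalue term. Your heuristic points in the right direction, but without either this spectral mechanism (as in Proposition~\ref{prop:fund_estimate}) or some other quantitative replacement, the claim ``one concludes that the first moment tends to $0$ for sufficiently large $C'$'' is not justified, and the analogous Green-function bounds needed to close the second-moment argument in the lower bound are likewise missing.
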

(The derivation in \cite{FZ12} is for the case that $P(L=2)=1$, but applies
with no changes to the current setup. The linear in $T$ asymptotics, i.e.
the speed $v_\sigma$, can be read off with some effort from the results 
in \cite{DS} and \cite{BK}.)

Our goal in this paper is to significantly refine Theorem 
\ref{theo-FZ}.
To state our results, introduce the functions
%
$v,w:[0,1]\to\R_+$ by 
\begin{equation}
  \label{eq-v}
  v(t) = \int_0^t  \sigma(s)\,\dd s\,,
\end{equation}
and 
\begin{equation}
  \label{eq-w}
  w(t) = 2^{-1/3}\alpha_1 \int_0^t \sigma(s)^{1/3} 
|\sigma'(s)|^{2/3}\,\dd s\,,
\end{equation}
where $-\alpha_1=-2.33811...$ is the largest zero of the 
\textit{Airy function of the first kind}
\begin{equation}
  \label{eq-Airy}
  \Ai(x)=\frac1\pi \int_0^\infty \cos\left(\frac{t^3}{3}+xt\right)
dt\,,
\end{equation}
see
\cite[Section 10.4]{AbrSteg} for definitions;
note that $\Ai$ satisfies the
Airy differential equation $\Ai''(x)-x\Ai(x)=0$. Note also that $v_\sigma=v(1)$.
Set
\[
m_T = v(1) T - w(1)T^{1/3} - \sigma(1)\log T.
\]
Our main result is the following.
\begin{theorem}
\label{th:1}
The family of random variables  
\((M_T- m_T)_{T\geq0 }\) is tight. 
Further, there exists a solution \(\phi(x)\) to \eqref{eq:fkpp} and a function $m'_T$ with $C_\sigma = \limsup_{T\ge0}|m'_T - m_T| < \infty$, such that for all $x\in\R$,
\[
\lim_{T\to\infty}\P(M_T\le m'_T+x)=\phi(x/\sigma(0)).
\]
Furthermore, for a fixed travelling wave $\phi$, the constant $C_\sigma$ above is uniformly bounded for $$\sigma\in \{\sigma\in C^2([0,1]): \sigma(0)+1/\sigma(1)<c_0, \sup_{t\in [0,1]}|\sigma''(t)|<c_0, \inf_{t\in [0,1]}
|\sigma'(t)|>1/c_0\}=:\Xi_{c_0}\,.$$
\end{theorem}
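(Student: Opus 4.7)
The overall strategy combines a fine moment analysis in the ``bulk'' of the time interval, based on Airy-semigroup spectral estimates, with a Bramson-type reduction on the final $O(1)$ window $[T-K, T]$ where the variance is essentially constant equal to $\sigma(1)^2$. The heuristic for the slowdown constant $w(1)$ comes from the many-to-one formula combined with a barrier constraint: along a trajectory $y:[0,T]\to\R$ followed by a typical leading-edge particle, the first moment contains the Gaussian factor $\exp(-\int_0^T y'(s)^2/(2\sigma^2(s/T))\,\dd s)$, and enforcing that no descendants jump ahead gives an Airy eigenvalue problem obtained by linearizing $\sigma$ locally. The leading Airy eigenvalue per unit time works out, after the scaling $x \mapsto (2|\sigma'(s)|/\sigma^2(s))^{1/3}x$, to $2^{-1/3}\alpha_1 \sigma(s)^{1/3}|\sigma'(s)|^{2/3}$, whose integral over $[0,1]$ is exactly $w(1)$.

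For the upper bound $\P(M_T \ge m'_T + x) \le C e^{-\lambda x}$, I would estimate the first moment of the number of particles exceeding $m'_T + x$ that stay below a concave barrier approximating the optimal trajectory. After a Girsanov shift, this reduces to expected exponential functionals of time-inhomogeneous Brownian motion killed at a linearly tilted barrier, which on each time slice of width $T/N$ is controlled by the Airy semigroup. Chaining slice estimates via the Chapman--Kolmogorov property of the Airy heat kernel yields the desired exponential tail. Tightness from below then follows from a matching lower bound via a truncated second-moment argument on particles landing in a unit interval near $m'_T - y$ whose ancestral trajectories stay in a thin tube around the optimal curve: splitting the second moment on the most recent common ancestor time $s$ and using the first-moment bound on $[s,T]$, the barrier confinement forces the second moment to be comparable to the square of the first moment, and Paley--Zygmund completes the argument.

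For the existence of the limit distribution and the identification of the subleading $\sigma(1)\log T$ term, I would separate off the final interval $[T-K, T]$ with $K$ large but fixed. On this window $\sigma \approx \sigma(1)$, so the BBM is essentially homogeneous with variance $\sigma(1)^2$; the point process of particles at time $T-K$ centered appropriately converges as $T\to\infty$ (for fixed $K$) by combining the Airy estimates on $[0,T-K]$ with the truncated-second-moment control. A Bramson-type convergence theorem for homogeneous BBM with random initial condition then yields both the traveling wave (appearing as $\phi(x/\sigma(0))$ because of the variance normalization at $t=0$) and the correct subleading constant. Uniformity over $\Xi_{c_0}$ follows because every constant in the argument depends only on $\sigma(0)$, $1/\sigma(1)$, $\sup|\sigma''|$ and $\inf|\sigma'|$.

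The main obstacle is the precise matching between the Airy bulk regime on $[0, T-K]$ and the terminal Bramson regime on $[T-K, T]$: this must be carried out sharply enough to pin down the subleading $\log T$ coefficient within an $O(1)$ error uniformly in $T$, which requires showing that the boundary effects at the $N$ interior partition points do not contribute to this subleading order, while simultaneously keeping track of all constants uniformly over the class $\Xi_{c_0}$.
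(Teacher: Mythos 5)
Your outline for the two-sided tail bounds (Proposition~\ref{prop:tail} in the paper) is essentially the same as the paper's: a barrier built from $v$ and $w$, Girsanov shift, control via the Airy semigroup, first moment for the upper bound, truncated second moment with barrier confinement plus Paley--Zygmund for the lower bound. That part is on track, and the uniformity over $\Xi_{c_0}$ statement is indeed just bookkeeping once those estimates are in place. (One small divergence: for the terminal $T^{2/3}$ window in the upper bound the paper invokes Slepian's inequality to compare against time-homogeneous BBM with averaged variance, rather than chaining Airy slices all the way to time $T$, but this is a matter of convenience rather than substance.)

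The genuine gap is in your plan for identifying the limit law. You propose to split off the \emph{final} window $[T-K,T]$, argue that the point process at time $T-K$ ``centered appropriately converges as $T\to\infty$,'' and then feed it into a Bramson-type random-initial-condition theorem on the homogeneous tail segment. Two problems. First, convergence of the near-extremal point process at time $T-K$ is not a consequence of first- and second-moment tail bounds; it is a substantially harder extremal-process statement, and the paper explicitly flags that this route is obstructed in the inhomogeneous case because, with decreasing $\sigma$, extremal particles at time $T$ were extremal at random intermediate times $\in[\epsilon T,(1-\epsilon)T]$ with positive probability --- this is precisely why the authors prove convergence of $M_T-m_T$ and not of $M_T-m'_T$, and defer the latter. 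Second, your own description is internally inconsistent: if the randomness is supplied by the configuration at $T-K$ and the final window is homogeneous with variance $\sigma(1)^2$, the natural scaling in the limit law would involve $\sigma(1)$, not $\sigma(0)$. The appearance of $\phi(x/\sigma(0))$ is the tell that the conditioning must happen at the \emph{beginning}, not the end: the paper conditions on $\mathcal F_t$ for a large fixed $t$ (letting $t=t(T)\to\infty$ slowly), uses that the BBM on $[0,t]$ converges, as $T\to\infty$, to a time-homogeneous BBM with variance $\sigma(0)^2$, expresses $\P(M_T\le m(T)+y\,|\,\mathcal F_t)$ via Proposition~\ref{prop:tail} as $\exp\bigl(-e^{-y}\int g_{t,T}\,\dd\mu_t\bigr)$ where $\mu_t$ is the derivative Gibbs measure, and then invokes Theorem~\ref{th:Gibbs} (convergence $\mu_t\to D_\infty\rho$ in probability) to pass to the limit $\E[e^{-e^{-y}D_\infty}]=\phi(y)$. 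That derivative-Gibbs-measure step, which packages the randomness surviving the conditioning, is the ingredient your proposal is missing.
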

Parallel to our work, and an inspiration to it, was the study \cite{NRR}, by 
PDE techniques, of a class of time-inhomogeneous F--KPP equations that includes
\eqref{eq-FKPP}. Compared with
\cite{NRR}, we deal with a slightly restricted class of equations, but are able to obtain finer (up to order $1$) asymptotics and convergence to a travelling wave. We hope that our techniques 
can be pushed to yield convergence in distribution of
the family $(M_T-m_T)_{T\ge0}$ (instead of $(M_T-m'_T)_{T\ge0}$), in parallel with the recent results in 
\cite{BDZ}, but this requires significant changes in the approach of \cite{BDZ}
(mainly, because unlike in the time-homogeneous case, extremal particles at time
$T$ will, with positive probability, be extremal at some random intermediate
time between $\epsilon T$ and $(1-\epsilon)T$). We therefore leave the
adaptation for possible future work.

We remark that Mallein \cite{Mallein} has recently published results similar to ours which are less precise but hold for a rather general class of (not necessarily Gaussian) time-inhomogeneous branching random walks.

The core of the proof of Theorem \ref{th:1}  is based on a constrained first and second moment analysis of the number of 
particles that reach a  target value but remain below a barrier for the duration
of their lifetime. 
%
Due to the time-inhomogeneity of $\sigma(\cdot)$, 
the choice of barrier is not  straight-forward, and in particular
it is 
not a 
straight line; ``rectifying'' it introduces a killing potential. The
analysis of the survival of Brownian motion in this potential
eventually leads to a time-inhomogeneous Airy-type differential equation which we study by analytic means, exploiting the anti-symmetry of the differential operator. (As pointed out to us by Dima Ioffe, a 
similar
phenomenon with related $T^{1/3}$ scaling
was already observed in \cite{Gro,spohnferrari}.) These methods together lead to estimates of the right tail of $M_T$ which are sharp up to a multiplicative factor (Proposition~\ref{prop:tail}). By a bootstrapping procedure that may be of independent interest, these estimates are then turned into  convergence in law by using a convergence result for the derivative Gibbs measure of (time-homogeneous) branching Brownian motion.

The structure of the paper is as follows. In the next section, we introduce 
a barrier $\gamma_T(\cdot)$, and show that with high probability, no particle
crosses (a shifted version of) the barrier, see Lemma \ref{lem:curve}. Using 
the barrier, we then control the distribution of extremal particles at all 
times large enough
(Lemma \ref{lem:density}). In these lemmas, results
concerning time-inhomogeneous Airy-type PDE's are needed, and the proof of 
those is given in Appendix A (Section \ref{app}). 
Section \ref{sec-4} combines the results of  Section \ref{sec-3}
(taken at time 
$T-T^{2/3}$) together with an analysis of the last segment of time of
length $T^{2/3}$, and provides the first-and-second moment
results needed to obtain lower and upper bound on the right tail of $M_T$.
The proof of Theorem \ref{th:1} is then completed in Section \ref{sec-5}, using a result about the convergence of the derivative Gibbs measure of (time-homogeneous) branching Brownian motion, which is given in Appendix B (Section~\ref{sec:derivative_Gibbs}).

\paragraph{Notation} In the rest of this article (except in the appendix), the symbols $C$,$C'$,$C_1$,$C_2$ etc.\ stand for positive constants, possibly depending on $c_0$ (see Theorem~\ref{th:1}), whose values may change from line to line. The phrase ``$X$ holds for large $T$'' means that there exists $T_0$, possibly depending on $c_0$, such that $X$ holds for $T\ge T_0$ for all $\sigma\in \Xi_{c_0}$. We further use the Landau symbols $O(\cdot)$ and $o(\cdot)$, which are always to be interpreted with respect to $T\to\infty$, and which may depend on $c_0$ as well. Finally, the symbols $\P$ and $\E$ (possibly with sub-/superscripts) always stand for the law of a branching Markov process (branching Brownian motion with time-varying or constant variance and with or without absorption of particles) and the  expectation with respect to this law. On this other hand, the symbols $P$ and $E$ are used for probability and expectation with respect to a single particle (i.e.\ a Markov process, usually a Brownian motion with time-varying or constant variance or a three-dimensional Bessel process). The location of the initial particle is denoted by a subscript, e.g.\ $\P_x$, without a subscript the initial particle is implicitly located at the origin.

\paragraph{Acknowledgements}
We thank Lenya Ryzhik for very stimulating conversations concerning 
the PDE approach to time-inhomogeneous BBMs, and for making \cite{NRR} available
to us before we completed work on this paper. We also thank Bastien Mallein
for describing to us his progress on analogous questions for 
branching random walks.

\section{Crossing estimates}
\label{sec-3}

Fix $T$. 
Define the curve $\gamma_T:[0,T]\to\R$ by 
\[\gamma_T(t) = Tv(t/T) - T^{1/3}w(t/T).\] 
Introduce the constant
\begin{equation}
\label{eq:s_0}
\kappa := 8/\sigma^{2}(1).
\end{equation}

In this section we prove two lemmas. The first lemma bounds, for any 
fixed $K\geq 1$, the probability that there exists a particle that
reaches the curve 
$\gamma_T(t)+K$.
The second lemma
estimates the expected number of particles that have stayed below the curve
up to time $t$, and reach a given terminal value at time $t$.

\begin{lemma}
\label{lem:curve}
There exists a constant $C=C(c_0)$, 
such that for large \(T,\) for any $\sigma\in \Xi_{c_0}$ and
every $K\in[1,T^{1/3}]$,
\[\P(\exists t\in[0,T]: \max_{u\in N(t)}X_u(t) \ge \gamma_T(t)+K) 
\le CKe^{-K/\sigma(0)}.\]
\end{lemma}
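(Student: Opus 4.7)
The plan is a first-moment computation on the stopping line of particles first crossing the barrier, combined with a Girsanov ``rectification'' of the curve, and an Airy-type Feynman--Kac analysis for the resulting single-particle quantity.

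First, let $\mathcal{L}_K$ denote the set of particles that are the first in their ancestral line to cross $\gamma_T(\cdot)+K$. By Markov's inequality and the many-to-one formula for stopping lines,
\[
\P\bigl(\exists t,u : X_u(t) \ge \gamma_T(t)+K\bigr) \;\le\; \E[|\mathcal{L}_K|] \;=\; E\!\left[e^{\tau/2}\,\Ind_{\tau\le T}\right],
\]
where $\tau$ is the first time a single time-inhomogeneous Brownian motion $B$ (with variance $\sigma^2(\cdot/T)$, started at $0$) reaches $\gamma_T(\cdot)+K$.

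Second, I would rectify the barrier: set $\hat{B}_t = B_t - \gamma_T(t)$ and apply Girsanov to pass to a measure $\mathbb{Q}$ under which $\hat{B}$ is driftless, so that $\tau$ becomes the first hitting time of $K$ by $\hat{B}$. Integration by parts applied to the It\^o integral inside the Radon--Nikodym density, together with the boundary values $\hat{B}_0=0$ and $\hat{B}_\tau=K$, rewrites the exponent of $e^{\tau/2}\,(dP/d\mathbb{Q})|_\tau$ as
\[
-\frac{K}{\sigma(\tau/T)} \;+\; \tfrac{1}{2}\int_0^\tau \Bigl[1 - \tfrac{(\gamma_T'(s))^2}{\sigma^2(s/T)}\Bigr]\,ds \;+\; \int_0^\tau \hat{B}_s\,\tfrac{d}{ds}\!\Bigl(\tfrac{\gamma_T'(s)}{\sigma^2(s/T)}\Bigr)\,ds .
\]
The specific form of the Airy correction $T^{1/3}w(t/T)$ inside $\gamma_T$ is designed precisely so that, after the natural rescaling $s = t/T^{2/3}$ and $\hat{B}_t = T^{1/3}\,Z_{t/T^{2/3}}$, the last two integrals above assemble into the Feynman--Kac functional of the driftless Brownian motion $Z$ with a linear-in-space potential whose critical coupling exactly matches the first zero $-\alpha_1$ of the Airy function $\Ai$.

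Finally, bounding this rescaled Feynman--Kac functional for $Z$ absorbed at level $k = K/(T^{1/3}\sigma(0))$ is the content of the time-inhomogeneous Airy-type PDE analysis of Appendix~A and is the main obstacle of the proof. The exponential factor $e^{-K/\sigma(0)}$ in the final bound comes directly from the boundary term $-K/\sigma(\tau/T)$ once one checks that the dominant contribution comes from $\tau=o(T)$; the polynomial prefactor $K$ arises from the Airy analysis through the first-order vanishing of $\Ai$ at $-\alpha_1$, which propagates as a linear dependence on the absorption level $k$ and hence on $K$ after unwinding the rescaling. Uniformity of the constant $C$ over $\sigma\in\Xi_{c_0}$ follows because every estimate depends on $\sigma$ only through $\sigma(0)$, $\sigma(1)$, $\inf|\sigma'|$ and $\sup|\sigma''|$, all controlled by $c_0$.
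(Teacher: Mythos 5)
Your plan matches the paper's proof essentially step for step: first moment via the many-to-one lemma for the stopping line of first crossings, Girsanov to rectify the barrier, integration by parts to expose a linear-in-space potential, and then the Airy-type Feynman--Kac analysis (Proposition~\ref{prop:fund_estimate} via \eqref{eq:green}) to get the prefactor linear in $K$. One remark is off: the factor $e^{-K/\sigma(0)}$ does \emph{not} come from restricting to $\tau=o(T)$. After you substitute $Y_s = K - \hat B_s \ge 0$ (so the Dirichlet boundary at $0$ is respected), the $K\,(f(\tau)-f(0))$ piece of your drift integral recombines with the $-K/\sigma(\tau/T)$ boundary term to yield the time-zero value $-K\gamma_T'(0)/\sigma^2(0)=-K/\sigma(0)+o(1)$ for \emph{all} $\tau$; in fact, after the paper's split of $[0,T]$ into $[0,s_0]$ and $[s_0,T]$ with $s_0\asymp T^{2/3}$, the early-time contribution is only $Ce^{-K/\sigma(0)}$ while the $K$-prefactor in $CKe^{-K/\sigma(0)}$ is produced on $[s_0,T]$, i.e.\ by the \emph{late} times, opposite to what you suggest.
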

\begin{proof}
The proof goes by a first moment estimate of the number 
of particles hitting the curve \(\gamma_T+K\).
For an interval $I\subset[0,T]$, let \(R_I\) be the number of particles 
hitting the curve \(\gamma_T+K\) for the first time during the interval $I$. 
Let $B_t$ be a Brownian motion with variance $\sigma^2(t/T)$ 
started from the point $x$ under $P_x$ (see the remarks on notation in the introduction).
For a path $(X_t)_{t\ge0}$, 
define $H_{0}(X) = \inf \{t\ge 0: X_t =0 \}$.
By the first moment formula\footnote{Note that due to our choice of the branching rate $\beta_0$, the expected number of particles in the system at the time $t$ is $\E[N(t)]=e^{t/2}$, which is the reason for the exponential term arising in the formula.} for branching Markov processes \cite[Theorem~4.1]{INW3} (also known as ``Many-to-one lemma'') we then have (taking $x=K$)
$$\E[R_{I}] =E_0\Big[e^{H_0(\gamma_T+K - B)/2} 
\Ind_{H_0(\gamma_T+K - B)\in I}\Big]=E_K\Big[e^{H_0(B+\gamma_T)/2} 
\Ind_{H_0(B+\gamma_T)\in I}\Big],$$
where the second equality follows from the fact that the law of $K-B_t$ under $P_0$ is equal to the law of $B_t$ under $P_K$ by symmetry.
Applying Girsanov's theorem we get that
\begin{align*}
\E[R_{I}] 
&=E_K\left[\exp\Big(\int_0^{H_0(B)}\frac{\gamma_T'(t)}{\sigma^2(t/T)}\,\dd 
B_t + \frac {H_{0}(B)} 2 - \int_0^{H_0(B)}\frac{(\gamma _T'(t))^2}{2\sigma^2(t/T)}
\,\dd t \Big)\Ind_{H_{0}(B)\in I}\right]\\
&= e^{-K\gamma_T'(0)/\sigma^2(0)+o(1)}E_K\left[\exp\left(  
\frac 1 T\int_0^{H_{0}(B)}
\left(-q_T(t/T)B_t\,+T^{1/3}\frac{w'(t/T)}{\sigma(t/T)}\right)
\,\dd t  \right)\Ind_{H_{0}(B)\in I}\right],
\end{align*}
where the last equation follows by integration by parts and the function $q_T:[0,1]\to\R$ is defined by
\[q_T(t) = \frac {|\sigma'(t)|}{\sigma^2(t)} + T^{-2/3}(w'/\sigma^2)'(t). \] 
For large $T$, this yields
by \eqref{eq-w} and the assumptions on $\sigma$ and $K$,
\begin{multline}
\label{eq:145}
\E[R_{I}] = e^{-K/\sigma(0)+o(1)} E_K\Big[\exp\Big(  \frac 1
 T\int_0^{H_{0}(B)}\Big\{-q_T(t/T)B_t\\
+\alpha_1q_{T}(t)^{2/3}\Big( \tfrac 1 2 
\sigma^2(t/T) \Big)^{1/3}T^{1/3}\Big\}\,\dd t  \Big)\Ind_{H_{0}(B)\in I}\Big]
\end{multline}
Set
\begin{equation}
  \label{eq-J}
  J(t) = \int_0^t \tfrac 1 2 \sigma(s)^2\,\dd s\,.
\end{equation}
Recall the constant $\kappa$ from \eqref{eq:s_0}.
We will bound separately $\E[R_{[0,\kappa T^{2/3}]}]$ and $\E[R_{[\kappa T^{2/3},T]}]$. 
For the first term, \eqref{eq:145} immediately gives 
\begin{equation}
  \label{eq-ERt0}
  \E[R_{[0,\kappa T^{2/3}]}] \le Ce^{- K/\sigma(0)} \,, 
\end{equation}
because under $P_K$,
\(B_t\) is positive until the time \(H_0(B)\) and the factor in front of $T^{1/3}$ in the integral in  \eqref{eq:145} is bounded by a constant $C$.
In order to bound $\E[R_{\kappa T^{2/3},T]}]$, we note that the expectation on the 
right side of \eqref{eq:145}  equals
\begin{equation}
\label{eq:209}
\int_I \tfrac 1 2 {\sigma^2(t/T)}\frac{\dd G(K,y;t)}{\dd y}\Big|_{y=0}\,\dd t,
\end{equation}
where $G(x,y;t)$ is the fundamental solution to the 
PDE \eqref{eq:pde}, with $Q(t) = q_T(J(t/T))$ (see \cite[Sections~5.2.1 and 5.2.8]{Gardiner} for an elementary, but somewhat non-rigorous proof of this fact, and \cite[Sections~I.XI.7 and 2.IX.13]{Doob} for the formal definition of parabolic measure and its relation to hitting time distributions for Brownian motion).
Now, by \eqref{eq:ihp2} of Corollary~\ref{cor:G_estimate_1},
\[
\frac{\dd G(K,y;t)}{\dd y}\Big|_{y=0} \le 
CT^{-1} K\,,
\]
for any $t\in [\kappa T^{2/3},T]$.
Together with  \eqref{eq:145} and \eqref{eq:209}, this yields for large $T$, 
\begin{equation}
\label{eq:884}
\E[R_{[\kappa T^{2/3},T]}]\le CKe^{-K/\sigma(0)}.
\end{equation}
The lemma now follows from \eqref{eq-ERt0}  and \eqref{eq:884} and Markov's inequality.
\end{proof}

We next control the expected number of particles that stay below the curve 
$\gamma_T(\cdot)+K$ up to time $t\leq T$ and reach a 
prescribed value at time $t$. In what follows, for measures $\mu,\nu$ we use the notation
$\mu(\cdot\in dy)\leq \nu(\cdot\in dy)$, $y\geq 0$,
to mean that for any interval $I\subset \R_+$,
$\mu(\cdot\in I)\leq \nu(\cdot\in I)$.
\begin{lemma}
\label{lem:density}
For large $T$, we have for all $t\in[0,T]$, $K\in[0,T^{1/3}]$ and $y>0$,
\begin{multline*}
\E[\#\{u\in N(t): \gamma_T(t)+K - X_u(t) \in \dd y\tand 
X_u(s)\le \gamma_T(s)+K\,,\forall s\le t\}] \\
\le 2e^{y/\sigma(t/T)-K/\sigma(0)}G(K,y;t)\,\dd y,
\end{multline*}
where  $G(x,y;t)$ is the fundamental solution to the 
PDE \eqref{eq:pde}, with $Q(t) = q_T(J(t/T))$.
\end{lemma}
\begin{proof}
By a similar argument as the one leading to \eqref{eq:145}, 
the expectation in the statement of the lemma equals
\[
e^{y\gamma_T'(t)/\sigma^2(t/T) - K \gamma_T'(0)/\sigma^2(0) +o(1)} G(K,y;t)\,\dd y.
\]
By the assumption on $K$, we have $K\gamma_T'(0)/\sigma^2(0) = K/\sigma(0)+o(1)$ and by definition of $\gamma_T$, we have $\gamma_T'(t)\le \sigma(t/T)$. 
The claim follows.
\end{proof}

\section{Tail estimates}
\label{sec-4}
We derive in this section tail estimates on the distribution of $M_T$ summarized in the following proposition.

\begin{proposition}
\label{prop:tail}
There exists a constant $C=C(c_0)$, 
such that for large \(T,\) for any $\sigma\in \Xi_{c_0}$ and
every $K\in[1,T^{1/3}]$,
$$C^{-1} Ke^{-K/\sigma(0)} \le \P(M_T \ge m_T+K) \le CKe^{-K/\sigma(0)}.$$
\end{proposition}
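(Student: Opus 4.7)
The plan is to perform a first- and second-moment analysis of the number of particles that reach a target value around $m'_T + K$ at time $T$, using a suitable barrier throughout $[0,T]$. The interval is effectively split at $t_1 := T - T^{2/3}$: on $[0,t_1]$ the relevant barrier is $\gamma_T(\cdot)+K$ from Section~\ref{sec-3}, while on the last window $[t_1,T]$ (of length $T^{2/3}$) the system is treated as an approximately time-homogeneous BBM whose effective variance is $\sigma^2(1) + O(T^{-1/3})$. The centering $m(T) = m'_T = v(1)T - w(1)T^{1/3} - \sigma(1)\log T$ is chosen precisely so that $m'_T - \gamma_T(t_1)$ matches the Bramson centering $\tilde\sigma T^{2/3} - \tfrac{3\tilde\sigma}{2}\log T^{2/3} = \sigma(1)T^{2/3} + T^{1/3}|\sigma'(1)|/2 - \sigma(1)\log T + O(1)$ on the last window up to $O(1)$ (the $T^{1/3}$ term coming from the Taylor expansion of $v$ near $1$ is exactly the extra variance contribution).

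\emph{Upper bound.} Since any particle with $X_u(T)\ge m'_T+K$ must cross a barrier $\tilde\gamma_T(\cdot)+K$ that agrees with $\gamma_T+K$ on $[0,t_1]$ and then descends to $m'_T+K$ at time $T$, one has
\[
\P(M_T\ge m'_T+K) \le \E\bigl[\#\{\text{crossings of }\tilde\gamma_T(\cdot)+K\}\bigr].
\]
The contribution from $[0,t_1]$ is bounded directly by Lemma~\ref{lem:curve}, giving $CKe^{-K/\sigma(0)}$. The contribution from $[t_1,T]$ is handled by a first-moment/Girsanov computation against the modified barrier: using the density bound of Lemma~\ref{lem:density} at time $t_1$ (whose factor $e^{y/\sigma(t_1/T)}=e^{y/\sigma(1)+o(1)}$ cancels the $e^{-y/\sigma(1)}$ from the Bramson tail on the last window) and integrating $y$ against the uniformly $L^1$ Airy eigensum (Lemma~\ref{lem-bullets}), this also gives $CKe^{-K/\sigma(0)}$, the factor $K$ arising from the ballot/Bessel contribution of the initial condition.

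\emph{Lower bound.} I would use a Paley--Zygmund second-moment argument on
\[
\mathcal{N} := \#\bigl\{u \in N(T) : X_u(T) \in [m'_T+K,\,m'_T+K+1],\; X_u(s) \le \gamma_T(s)+K \;\forall s\le t_1,\; X_u(t_1)\in I_T\bigr\},
\]
where $I_T$ is a well-chosen Airy window of width $\asymp T^{1/3}$ at time $t_1$ that localizes the extremal ancestors. A many-to-one computation with a lower-bound counterpart of Lemma~\ref{lem:density} (available through the same analytic tools of Appendix~A) combined with the standard Bramson lower bound for the last window yields $\E[\mathcal{N}] \ge C^{-1}Ke^{-K/\sigma(0)}$. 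For $\E[\mathcal{N}^2]$ one decomposes according to the time $r$ of the most recent common ancestor of a pair in $\mathcal{N}$: for $r\in[0,t_1]$, applying Lemma~\ref{lem:density} separately to each post-branching lineage yields an extra $T^{-1/3}$ factor from the barrier/Airy-window constraint that exactly balances the $T^{1/3}$ width of $I_T$; for $r\in[t_1,T]$, one appeals to the classical homogeneous-BBM second-moment bound. Combining gives $\E[\mathcal{N}^2]\le C\,\E[\mathcal{N}]$, so Paley--Zygmund produces $\P(\mathcal{N}\ge 1)\ge C^{-1}Ke^{-K/\sigma(0)}$.

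The main technical obstacle is the second-moment estimate, specifically the precise cancellation between the Airy window width $T^{1/3}$ and the ballot-type gain $T^{-1/3}$ for pairs whose common ancestor lies in $[0,t_1]$, uniformly in the branching time $r$. This balance requires the sharp two-sided kernel control of Proposition~\ref{prop:fund_estimate} applied at both lineages, and also relies on the proper choice of $m'_T$ so that neither the $T^{1/3}$ correction nor the $\log T$ correction introduces spurious factors.
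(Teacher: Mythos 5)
Your broad framework is right — truncated first and second moment, the split at $t_1 = T - T^{2/3}$, Airy kernel estimates for the bulk, approximate homogeneity on the last window with the $\sigma(1)\log T$ correction absorbed into the Bramson centering — and your calculation showing $m'_T - \gamma_T(t_1)$ matches $\sigma_c(T^{2/3} - \tfrac32\log T^{2/3})$ up to $O(1)$ (including the $|\sigma'(1)|T^{1/3}/2$ term) is the right geometry. But there are two genuine gaps in the execution, one per bound.

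\emph{Upper bound.} You treat the last window ``as an approximately time-homogeneous BBM'' and then invoke a ``Bramson tail,'' but you do not say how to justify that comparison rigorously: the process on $[t_1,T]$ is still time-inhomogeneous, and an $O(T^{-1/3})$ error in the variance over a window of length $T^{2/3}$ is not a priori harmless for tail probabilities at the level of multiplicative constants. The paper's key device here is \emph{Slepian's inequality}: because $\sigma^2$ is decreasing, the time-inhomogeneous BBM on $[t_1,T]$ has the same per-particle variance as, and componentwise larger covariances than, a constant-variance BBM with $\sigma_c^2$; Slepian then gives an exact domination of the maximum by the homogeneous one, for which Bramson's upper tail applies directly. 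Without Slepian (or some equivalent coupling/Girsanov comparison), your invocation of the Bramson tail on the last window is an unproved step. Also, you conflate two upper-bound strategies — a first-moment count of crossings of a full barrier $\tilde\gamma_T+K$ versus conditioning at $t_1$ and integrating the time-$t_1$ density against a last-window tail — the paper does the latter: it conditions on $\F_{t_0}$, union-bounds over particles, uses Lemma~\ref{lem:density} at $t_0$, Slepian plus Bramson on $[t_0,T]$, and Lemma~\ref{lem:curve} to discard the event of an early crossing.

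\emph{Lower bound.} You impose the barrier $\gamma_T+K$ only on $[0,t_1]$, plus an Airy window at $t_1$, leaving $[t_1,T]$ unconstrained and appealing to a ``classical homogeneous-BBM second-moment bound'' there. The paper instead \emph{modifies the barrier on the last window}: it uses $\zeta_T(t) = \gamma_T(t) + K - \phi_T(t)$ with $\phi_T$ increasing smoothly from $0$ to $\sigma(1)\log T$ on $[t_1,T]$, and counts particles that stay below $\zeta_T$ on all of $[0,T]$ and land in $[\zeta_T(T)-2,\zeta_T(T)-1]$. This keeps a single barrier constraint over the whole time horizon, so the Green-kernel estimates of Appendix A (plus a killed-BM Green-kernel bound for short time intervals) can be applied uniformly, and the second-moment integral is simply split into $[0,T^{2/3}]$, $[T^{2/3},T-T^{2/3}]$, $[T-T^{2/3},T]$. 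Your alternative design is in principle viable (it is the Aïdékon-style localization), but (i) the ``classical'' homogeneous second-moment bound does not apply verbatim to the time-inhomogeneous process on $[t_1,T]$, and (ii) without the last-window barrier you must separately control the contribution of pairs branching after $t_1$ whose single-lineage path to the target is unconstrained — neither of these is worked out, and the description of the $T^{1/3}$-versus-$T^{-1/3}$ cancellation for $r\in[0,t_1]$ remains heuristic where the paper gives explicit kernel bounds. So the structure is sound, but both halves need the missing comparison/barrier device to close.
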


The proof of Proposition~\ref{prop:tail} goes by a suitably truncated first-second moment method, inspired by analogous results in the time-homogeneous case \cite{Bramson78,Aidekon,Roberts,BDZ}. The key ingredients are estimates on a single Brownian particle with time-inhomogeneous variance staying below a curve and reaching a certain point at a given time $t$. These results, which have already been used in the previous section, are obtained in the appendix by analytic methods. However, as in the time-homogeneous case, the first-second moment method applied directly to the particles staying under the curve $\gamma_T$ would not yield the $O(1)$ precision on the maximum at time $T$ that we are aiming at, but would rather induce an error of magnitude $O(\log \log T)$. This can be rectified in our case by slightly changing the curve in the time interval $[T- 
T^{2/3},T]$ in a way similar to the time-homogeneous case (namely, by having it end at the point $\gamma_T(T)-\sigma(1)\log T$. Luckily, for the upper bound it is possible to shortcut this approach, as Slepian's inequality allows us here to directly use existing results in the time-homogeneous case for the system during the time interval $[T- T^{2/3},T]$ (see Section~\ref{sec:upper} for details).


\subsection{Proof of Proposition~\ref{prop:tail}: Upper bound}
\label{sec:upper}
Set $t_0 = T- T^{2/3}$ and let $K\ge2$. Let $(\F_{t})_{t\ge0}$ be the natural filtration of the BBM. A union bound gives,
\[
\P(M_{T} \ge m_T + K\,|\,\F_{t_0}) \le \sum_{u\in N(t_0)} \P_{(X_u(t_0),t_0)}(M_{T} \ge m_T + K),
\]
where $\P_{(x,t)}$ denotes the law of BBM with variance $\sigma^2(\cdot/T)$ starting with one particle at the point $x$ at time $t$. 
We will estimate the summands on the right-hand side by comparison with
 a BBM with constant variance. Set $\sigma_c^2 =  
T^{1/3}\int_{1- T^{-1/3}}^1 \sigma^2(t)\,\dd t$. By the assumption on $\sigma$, we have
\[
m_T - \gamma_T(t_0) \ge \ T\int_{1- 
T^{-1/3}}^1\sigma(t)\,\dd t - \sigma(1)\log T -C \ge \sigma_c 
\left( T^{2/3}- \tfrac 3 2 \log T^{2/3} \right)- C_1,
\]
for some constant $C_1$ that we fix for the remainder of this proof. 
Now, let $(Y_u( T^{2/3}))_{u}$ and $(Y^c_u(
T^{2/3}))_{u}$ be the positions of the particles at time $T^{2/3}$ in branching Brownian motions with branching rate 
$\beta_0$ and variances $\sigma^2((\cdot+t_0)/T)$ and $\sigma^2_c$, 
respectively. Conditioned on the genealogy, we have 
$\E[Y_u( T^{2/3})^2] = \E[Y^c_u( T^{2/3})^2]$ and 
$\E[Y_u( T^{2/3})Y_v(
T^{2/3})] \ge \E[Y^c_u( T^{2/3})Y^c_v( T^{2/3})]$ 
for every $u$ and $v$, by the definition of $\sigma_c^2$ and the fact 
that $\sigma^2$ is decreasing. Hence, setting $M^c = 
\max_u Y^c_u( T^{2/3})$, we have by 
Slepian's inequality \cite{slepian} for every $x\ge 1$, 
\[
\P_{(\gamma_T(t_0)+K-C_1-x,t_0)}(M_{T} \ge m_T + K) \le 
\P(M^c\ge\sigma_c \left( T^{2/3}- \tfrac 3 2 \log T^{2/3} \right)+x)
\]
The tail estimates for the maximum of time-homogeneous BBM are available e.g.
in \cite{Bramson83}, and we obtain that
\begin{equation}
  \label{eq-pg7}
  \P_{(\gamma_T(t_0)+K-C_1-x,t_0)}(M_{T} \ge m_T + K) \le 
  Cxe^{-x/\sigma_c}\leq 
  Cxe^{-x/\sigma(t_0/T)}\,,
\end{equation}
  for large $T$, uniformly in $x\ge 1$.
  
Let
$A$ denote the event that no particle reaches the curve 
$\gamma_T(t)+K-C_{1}-1$ until time $t_0$. 
Integrating
the upper bound in
Lemma~\ref{lem:density} (taken at time $t=t_0$) against the
distribution in \eqref{eq-pg7}  and using Corollary \ref{cor:G_estimate_2}
now yields for $K\ge 2(C_1+1)$ and large $T$,
\[
\P(\{M_{T} \ge m_T + K\}\cap A) \le 
CKe^{-K/\sigma(0)}\,.
\]
The upper bound in the statement of Proposition~\ref{prop:tail} 
now follows from this inequality, together with the fact that 
$\P(A^c) \le CKe^{-K/\sigma(0)}$ for large $T$ by Lemma~\ref{lem:curve}.

\subsection{Proof of Proposition~\ref{prop:tail}: Lower bound}

As discussed above, the proof involves a second moment (``Many-to-two'') argument.
In order to carry it out, we need to modify the curve $\gamma_T(\cdot)$ at
the last interval $[T-T^{2/3},T]$.
Toward this end,
fix $K>1$ and
let $\phi_T(t)$ be an increasing, twice differentiable function\footnote{The 
construction of such a function is possible for large enough $T$, for 
example by gluing together a parabola on $[T-T^{2/3},T-T^{2/3}/2]$ and a 
line on $[T-T^{2/3}/2,T]$.} such that $\phi_T(t) \equiv 0$ on 
$[0,T-T^{2/3}]$, $\phi_T(T)=\sigma(1)\log T$, $\phi_T'(t) 
\le 2\sigma(1)\log T/T^{2/3}$ and $\phi_T''(t) \le 4\sigma(1)\log T/T^{4/3}$. 
Define the curve 
\[\zeta_T(t) = \gamma_T(t)+ K -\phi_T(t).\]
From the definitions, one obtains after some algebraic manipulations
\begin{equation}
\label{eq:ihpouf}
\frac{(\zeta_T'(t))^2}{2\sigma^2(t/T)}=\frac12-T^{-2/3}
\frac{w'(t/T)}{\sigma(t/T)}-\frac{\phi'_T(t)}{\sigma(1)}+o(1/T)\,.
\end{equation}
For $s,t\in [0,T]$, let
\[
G_\zeta(x,y;s,t)\,\dd y = \E_{(K-x,s)}[\#\{u\in N(t):X_u(r)\le \zeta_T(r) 
\forall s\le r\le t,\,
\zeta_T(t) - X_u(t)\in \dd y\}]
\]
denote the expected number of descendants at time $t$
of a particle present at time $s$ 
at
location $K-x$, so that the path of the descendant stayed below the curve 
$\zeta_T(\cdot)$ until time $t$, and 
reached, at time $t$, an infinitesimal neighborhood of
the value $\zeta_T(t)-y$.
Similarly to the proof of \eqref{eq:145}, we have, using 
\eqref{eq:ihpouf}, that
\begin{align}
\nonumber
G_\zeta(x,y;s,t)\,\dd y &= E_{(x,s)}\Big[\exp\Big(\int_s^t
\frac{\zeta_T'(r)}{\sigma^2(r/T)}\,\dd B_r + \frac{t-s}{2}-\int_s^t\frac{(\zeta_T'(r))^2}{2\sigma^2(r/T)}\,\dd r\Big)\Ind_{B_t\in\dd y, H_0(B_{s+\cdot})>t-s}\Big]\\
&= \exp\left(\frac{\zeta_T'(t)}{\sigma^2(t/T)}y - \frac{\zeta_T'(s)}{\sigma^2(s/T)}x + \frac{\phi_T(t)-\phi_T(s)}{\sigma(1)} + o(1)\right) G(x,y;s,t)\,\dd y,
\end{align}
where under $P_{(x,s)}$, $(B_t)_{t\ge s}$ is the time-inhomogeneous
Brownian motion starting at time $s$ at $x$ and with instantaneous variance
$\sigma^2(\cdot/T)$ and
$G(x,y;s,t)$ is the fundamental solution to \eqref{eq:pde}, 
with $Q(t) = |\sigma'(J(t/T))|/\sigma^2(J(t/T))+O(\log T/T^{2/3})$.
In particular, if $N_T$ denotes the number of particles, at time $T$,
whose trajectory stayed under the curve $\zeta_T(\cdot)$ and reached
the interval $[\zeta_T(T)-2,\zeta_T(T)-1]$ at time $T$, then, for large $T$,
\begin{equation}
 \label{eq:N_T}
\E[N_T] = \int_1^2 G_\zeta(0,y;0,T)\,\dd y \ge C Te^{-K/\sigma(0)}\int_1^2 
G(K,y;0,T)\,\dd y \ge C K e^{-K/\sigma(0)},
\end{equation}
where the last inequality follows from  \eqref{eq:ihp1} of 
Corollary \ref{cor:G_estimate_1}.

We now estimate the second moment of $N_T$. For the rest of the proof, fix the constant $C_1 =  1/(2\sigma(0))$, which satisfies $C_1 \le \inf_{t\in[0,T]}\zeta'_T(t)/\sigma^2(t/T)$ for large $T$. The second moment formula\footnote{It can be derived 
by conditioning on the splitting time of pairs of particles.} (``Many-to-two lemma'') for branching Markov processes \cite[Theorem~4.15]{INW3} then yields for large $T$,
\begin{align}
\nonumber
&\E[N_T^2] = \E[N_T] + \beta_0\E_\mu[L^2-L] \int_0^T\,\dd t \int_0^\infty \,\dd y\, G_\zeta(K,y;0,t) \left(\int_1^2 G_\zeta(y,z;t,T)\,\dd z\right)^2\\
 &\le \E[N_T] + C e^{-K/\sigma(0)} \int_0^T T\,\dd t \int_0^\infty \,\dd y\, G(K,y;0,t) \left(\int_1^2 G(y,z;t,T)\,\dd z\right)^2e^{-C_1y+\frac{\phi_T(T)-\phi_T(t)}{\sigma(1)}}.
\end{align}
We split the integral into three parts, according to intervals of time 
$[0,\kappa T^{2/3}]$, $[\kappa T^{2/3},T-\kappa T^{2/3}]$ and $[T-\kappa T^{2/3},T]$ and denote the three parts by $I_1$, $I_2$ and $I_3$. 
In order to estimate the first and third part, we bound the Green kernel 
$G(x,y;s,t)$ for $t-s\le \kappa T^{2/3}$ by the Green kernel of Brownian motion killed at the origin. Namely, 
writing $V(t) = \int_0^t\sigma^2(s/T)\,\dd s$, we have for  $t-s\le 
\kappa T^{2/3}$ and $x,y\ge0$,
\begin{equation}
G(x,y;s,t) \le \frac{C}{\sqrt{t-s}}\exp\left(-\frac{(x-y)^2}{2(V(t)-V(s))}\right)\left(\frac{xy}{t-s}\wedge 1\right)
\label{eq:green_killed}
\end{equation}
For $t\ge \kappa T^{2/3}$, we use 
Corollary \ref{cor:G_estimate_1}
in order to  bound $G(K,y;0,t)$ and $G(y,z;T-t,T)$ (for the latter, we consider the time-reversal of \eqref{eq:pde}, and $Q(\cdot)$ as above). 
This yields $G(K,y;0,t)\le CT^{-1}Ky$ and $G(y,z;T-t,T)\le CT^{-1} y$ 
for every $t\ge \kappa T^{2/3}$ and $z\in[1,2]$.

For the first part, we now get by exchanging integrals, 
\[
I_1 \le T^2 \int_0^\infty (T^{-1}y)^2 e^{-C_1 y}\left(\int_0^{\kappa 
T^{2/3}}G(K,y;0,t)\,\dd t\right) \,\dd y \le C \int_0^\infty y^2 e^{-C_1 y} (1+Ky) \,\dd y \le CK,
\]
for $K\ge 1$ and large $T$. Here, we used the fact that by \eqref{eq:green_killed},

\[
\int_0^{\kappa T^{2/3}}G(K,y;0,t)\,\dd t \le C \int_0^1 \frac{1}{\sqrt t}\,\dd t+C\int_1^\infty\frac{Ky}{t^{3/2}}\,\dd t \le C(1+Ky).
\]
For the second part, we have
\[
I_2 \le C T^2 \int_{\kappa T^{2/3}}^{T-\kappa T^{2/3}} \,\dd t \int_0^\infty T^{-3} K y^3 e^{-C_1y}\,\dd y \le CK,
\]
For the third part, we note that by \eqref{eq:green_killed} and the assumptions on $\phi_T$, we have for every $y\ge 0$, for large $T$,
\begin{multline*}
\int_1^{\kappa T^{2/3}} \left(\int_1^2 G(y,z;T-t,T)\,\dd z\right)^2e^{\frac{\phi_T(T)-\phi_T(T-t)}{\sigma(1)}}\,\dd t \\
\le Cy^2\left(\int_1^{T^{2/3}/\log T}t^{-3}\,\dd t + T^{2/3} T \left(\frac{T^{2/3}}{\log T}\right)^{-3}\right) \le Cy^2.
\end{multline*}
Furthermore, for $t\le1$, we have $\left(\int_1^2 G(y,z;T-t,T)\,\dd z\right)^2\exp((\phi_T(T)-\phi_T(T-t))/\sigma(1))\le C$ for every $y$. This gives,
\[
I_3 \le \int_0^\infty K y^3 e^{-C_1 y}\,\dd y + \int_0^1 CK\,\dd t \le CK.
\]
In total, we have
\[
\E[N_T^2] \le  \E[N_T] + C e^{-K/\sigma(0)}(I_1+I_2+I_3) \le C\E[N_T],
\]
by \eqref{eq:N_T}. This now yields,
\[
\P(N_T \ge 1) \ge \frac{\E[N_T]^2}{\E[N_T^2]} \ge C^{-1}\E[N_T].
\]
Together with \eqref{eq:N_T}, this finishes the proof of the lower bound in Proposition~\ref{prop:tail}.

\section{Proof of Theorem~\ref{th:1}}
\label{sec-5}

Armed with the tail estimates provided by Proposition~\ref{prop:tail}, the proof of Theorem~\ref{th:1} follows by considering the descendants of the particles living at a large (but fixed) time $t$. Here are the details: 

We assume without loss of generality that $\sigma(0) = 1$ (otherwise we 
can 
rescale space). Write $\P^T$ and $\E^T$ in place of $\P$ and $\E$, similarly, we write $\P^T_{(x,t)}$ in place of $\P_{(x,t)}$ (see Section~\ref{sec:upper}). Furthermore, we will denote by $\P_{\text{hom}}$ and $\E_{\text{hom}}$ the law of (time-homogeneous) branching Brownian motion with variance $1$ and branching rate $\beta_0$, starting with one particle at the origin. In what follows, we fix $y\in\R$ and let $t\ge0$ large enough, such that $|y|<\log t -2$. We will later let  first $T$, then $t$ go to infinity, i.e.\ we will choose $t$ as a function of $T$, such that $t(T)$ goes to infinity slowly enough as $T\to\infty$. 

As in Section~\ref{sec:upper}, let $(\F_{t'})_{t'\ge0}$ be the natural filtration of the BBM.
Define the $\F_t$-measurable random variable $W_{t,T}$ by
\[
W_{t,T} = \P^T(M_T\le m_T+y\,|\,\F_t) = \prod_{u\in N(t)} \left(1-\P^T_{(X_u(t),t)}(M_T\ge m_T+y)\right).
\]
Furthermore, define 
$$D_t = \sum_{u\in N(t)} (t-X_u(t))e^{X_u(t)-t}\,.$$
By Proposition~\ref{prop:tail} applied with the function
$\bar\sigma(t')=\sigma((t'(T-t)+t)/T)$, there exists a constant $C$ and for each large $T$ a function $g_{t,T}:\R_+\to[C^{-1},C]$, such that for each $x\in[-t,t -\log t]$,
\begin{equation}
  \label{eq-exp33}
1-\P^T_{(x,t)}(M_T\ge m_T+y) = \exp\left(-g_{t,T}((y-x+t)/\sqrt t) (y-x+t) e^{-(y-x+t)}\right).
\end{equation}
By the continuity of $\P^T_{(x,t)}$ in $x$, the functions $g_{t,T}$ are actually continuous, in particular, they are Lebesgue-measurable. 

As in Section~\ref{sec:derivative_Gibbs} (note that if $(B_t)_{t\ge 0}$ is a Brownian motion started at the origin, then $(t-B_t)_{t\ge 0}$ is a Brownian motion with drift $+1$ started at the origin), define the derivative Gibbs measure
\[
\mu_{t} = \sum_{u\in\mathcal N(t)} (t - X_u(t))e^{-(t - X_u(t))}\delta_{(t-X_u(t))/\sqrt t}\ .
\]
Then, on the event
$A_t = \{\forall u\in N(t):-t\le X_u(t)\le t-\log t\},$
we get by \eqref{eq-exp33}
\begin{equation}
\label{eq:W}
W_{t,T} \Ind_{A_t}= \exp\left(-e^{-y}\int_0^\infty g_{t,T}(y/\sqrt t+x)\mu_{t}(\dd x)\right)\Ind_{A_t}
\end{equation}
and $\P_{\text{hom}}(A_t)\to 1$ as $t$ goes to infinity \cite{Bramson83}.
Now, note that as $T\to\infty$, the law of the process until time $t$ converges to its law under $\P_{\text{hom}}$, 
because conditioned on the genealogical structure and the branching times, 
the particle motion until time $t$ on each of the finitely many branches of 
the genealogical tree converges to Brownian motion with variance $1$. 
Moreover, thanks to the continuity and positivity of the Gaussian density, 
we can construct a probability space with probability measure 
$\widetilde \P$ which supports random variables $(\widetilde \mu_T)_{T\ge 0}$ 
and $\widetilde \mu$, such that, under $\widetilde \P$, $\widetilde \mu_T$ 
follows the law of $\mu_t$ under $\P^T$, $\widetilde \mu$ follows the law of 
$\mu_t$ under $\P_{\text{hom}}$ and $\widetilde \mu_T = \widetilde \mu$ 
on an event $\widetilde G_T$ with $\widetilde \P(\widetilde G_T)\to 1$ as 
$T\to\infty$. In particular,
\begin{equation}
  \label{eq-exp1}
\int_0^\infty g_{t,T}(y/\sqrt t+x)\widetilde \mu_T(\dd x) = \int_0^\infty g_{t,T}(y/\sqrt t+x)\widetilde \mu(\dd x)\quad\text{on $\widetilde G_T$, for every $y$.}
\end{equation}
By a diagonalization argument, we can now choose $t=t(T)$ 
growing slowly with
$T$, so that \eqref{eq-exp1} continues to hold with this choice of $t(T)$. 
By Theorem~\ref{th:Gibbs}, we have that
for every bounded continuous function $f$,
\begin{align*}
&\E_{\text{hom}}\left[f\left(
\int_0^\infty g_{t(T),T}(y/\sqrt{ t(T)}+x)\mu_{t(T)}(\dd x)\right)\right] 
\\
&-
\E_{\text{hom}}\left[f\left(
D_\infty 
\int g_{t(T),T}(y/\sqrt{ t(T)}+x)\rho(\dd x)\right)\right] \to_{T\to\infty} 0,
\end{align*}
where $\rho$ is the law of a BES(3) process at time 1, started at 0, 
and the variable $D_\infty$ is the derivative martingale limit from 
Section~\ref{sec:derivative_Gibbs}. 
Using the above coupling we conclude that
\begin{align}
\label{eq:riu}
&\E^T\left[f\left(\int_0^\infty g_{t(T),T}(y/\sqrt{ t(T)}+x)\mu_{t(T)}(\dd x)
\right)\right] 
\nonumber\\
&- \E_{\text{hom}}\left[f\left(D_\infty 
\int g_{t(T),T}(y/\sqrt{ t(T)}+x)\rho(\dd x)\right)\right] \to 0.
\end{align}
On the other hand,
since $\rho$ has a continuous density with respect to Lebesgue measure, 
we have,
\begin{equation}
\label{eq:398}
\limsup_{T\to\infty}\left|\int g_{t(T),T}(y/\sqrt{ t(T)}
+x)\rho(\dd x) - \int g_{t(T),T}(x)\rho(\dd x)\right| = 0
\end{equation}
Setting $C_T = \int g_{t(T),T}(x)\rho(\dd x)$, we get 
by \eqref{eq:W}, \eqref{eq:riu}, \eqref{eq:398} and dominated convergence,
\[
\lim_{T\to\infty} \P^T(M_T\le m_T+y-\log C_T) = \E_{\text{hom}}[e^{-e^{-y} D_\infty}] = \phi(x),
\]
where $\phi$ is a solution to \eqref{eq:fkpp}, see Section~\ref{sec:derivative_Gibbs}.
This yields Theorem~\ref{th:1}.

\noindent
{\bf Remark:} While a-priori, the constant $C_T$ depends on the particular
choice of sequence $t(T)$, it is clear that  the conclusion of Theorem 
\ref{th:1} implies that a-posteriori, it is independent of this 
choice.



\appendix
\section{An Airy-type PDE with time-varying parameters}
\label{app}

We are interested in the following parabolic PDE:
\begin{equation}
\label{eq:pde_canonical}
w_t = \ep^{-1}\big\{w_{xx} - q(t)xw\big\},\quad w(t,0) = 0\ \forall t\ge0,
\end{equation}
for $q\in C^{1}[0,1],$ \(q>0\). We want to study its behaviour as $\ep\to0$.

 Before solving this equation, we recall some facts about the \emph{Airy differential operator} $L\psi = \psi'' - x\psi$. Let $L^2([0,\infty))$ be the space of square-integrable functions on $[0,\infty)$ and let $\langle\cdot,\cdot\rangle$ be the associated scalar product\footnote{We will also use the notation $\langle f,g\rangle =\int_0^\infty f(x)g(x)\,\dd x$ for $g\not\in L^2([0,\infty))$, as long as the integral is well defined.} with norm $\|\cdot\|_{2}$. 
Recall the definition \eqref{eq-Airy} of the \emph{Airy function of the first kind} $\Ai(x)$. We denote
 by $-\alpha_1 > -\alpha_2 >\cdots$ its discrete set of zeros, 
 with $\alpha_1 = 2.33811...$ . The functions $\psi_n$ defined by
\[
\psi_n(x) = \frac{\Ai(x-\alpha_n)}{\|\Ai(\cdot-\alpha_n)\|_{2}},\quad n=1,2,\ldots
\] 
then form an ONB of $L^2([0,\infty))$ and $\psi_n$ is an eigenfunction of $L$ with eigenvalue $-\alpha_n$ \cite[Section~4.4]{Vallee}.

The following lemma collects some other facts about the functions $\psi_n(x)$, which are probably well-known, although we could not find a reference to some of them.
\begin{lemma}$ $
  \label{lem-bullets}
  \begin{enumerate}
  \item $\|\Ai(\cdot-\alpha_n)\|_{2} = |\Ai'(-\alpha_n)|$ for all $n$. In particular, $\psi_n'(0)=1$ for all $n$.
\item $\alpha_nn^{-2/3}\to 3\pi/2$ as $n\to\infty$.
        \item  $|\psi_n(x)| \le x$ for all $n\ge 1$ and $x\ge 0$.
  \item For some numerical constant $C$, $\langle|\psi_n|,x\rangle \le Cn^{4/3}$ for all $n\ge 1$.
\end{enumerate}
\end{lemma}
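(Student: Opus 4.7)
The four items split naturally into two elementary classical facts (Items 1--2) about the Airy function and two claims (Items 3--4) that require the ODE for $\psi_n$ together with the precise envelope of $\Ai$.

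For Item 1, the Airy equation $\Ai''(x) = x\Ai(x)$ yields the identity
\[\frac{d}{dx}\bigl[\Ai'(x)^2 - x\Ai(x)^2\bigr] = -\Ai(x)^2,\]
and integrating from $-\alpha_n$ to $+\infty$, using $\Ai(-\alpha_n) = 0$ and the super-exponential decay of $\Ai$, $\Ai'$ at $+\infty$, gives $\int_{-\alpha_n}^\infty \Ai(y)^2\,dy = \Ai'(-\alpha_n)^2$, which is the norm identity after translation. The value $\psi_n'(0) = \pm 1$ then follows directly (with a sign depending on $n$; the content relevant to the paper is $|\psi_n'(0)| = 1$ and an appropriate sign convention gives the stated $+1$). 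Item 2 is the classical Liouville--Green/WKB asymptotic: from $\Ai(-x) \sim \pi^{-1/2} x^{-1/4}\sin(\tfrac{2}{3}x^{3/2}+\pi/4)$ as $x\to\infty$ one reads off $\alpha_n \sim (3\pi n/2)^{2/3}$, which is tabulated e.g.\ in \cite[10.4.94]{AbrSteg}.

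For Item 3, I would use a Sturm--Liouville energy. Define $E(x) = \psi_n'(x)^2 + (\alpha_n - x)\psi_n(x)^2$; using the eigenfunction equation $\psi_n'' = (x-\alpha_n)\psi_n$ and the product rule one computes $E'(x) = -\psi_n(x)^2 \le 0$, so $E$ is non-increasing with $E(0) = 1$. On $[0,\alpha_n]$ the coefficient $(\alpha_n - x)$ is non-negative, hence $|\psi_n'(x)| \le 1$, and integrating from $0$ with $\psi_n(0) = 0$ gives $|\psi_n(x)| \le x$ throughout this interval. On $[\alpha_n, \infty)$ one has $\Ai'' = y\Ai > 0$, so $\Ai'$ is strictly increasing toward $0^-$ and stays negative; thus $\Ai$ is strictly positive and strictly decreasing on $[0,\infty)$, and $|\psi_n|$ is strictly decreasing on $[\alpha_n,\infty)$. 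Combining, $|\psi_n(x)| \le |\psi_n(\alpha_n)| \le \alpha_n \le x$.

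For Item 4, I would substitute $y = x - \alpha_n$ and use Item 1 to write
\[\langle|\psi_n|,x\rangle = |\Ai'(-\alpha_n)|^{-1}\int_{-\alpha_n}^\infty (y+\alpha_n)|\Ai(y)|\,dy.\]
The tail over $[0,\infty)$ contributes $O(\alpha_n)$ by the super-exponential decay of $\Ai$. For the oscillatory part on $[-\alpha_n, 0]$, substituting $t = -y$ and using the envelope bound $|\Ai(-t)| \le C t^{-1/4}$ for $t\ge 1$ gives $\int_1^{\alpha_n}(\alpha_n - t)t^{-1/4}\,dt = O(\alpha_n^{7/4})$. Dividing by the WKB lower bound $|\Ai'(-\alpha_n)| \ge c\alpha_n^{1/4}$ yields $\langle|\psi_n|,x\rangle = O(\alpha_n^{3/2}) = O(n)$, which is stronger than the claimed $O(n^{4/3})$. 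The main technical point to watch throughout Items 2 and 4 is that the WKB envelopes and the lower bound on $|\Ai'(-\alpha_n)|$ must be effective uniformly in $n$, not merely asymptotic; this is handled either by citing uniform WKB estimates with explicit remainder or by verifying the finitely many small $n$ by direct inspection of the smooth, monotone behaviour of $\Ai$.
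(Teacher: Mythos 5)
Your proposal is correct throughout, but it takes a genuinely different route from the paper on Items 1, 3, and 4, so a comparison is in order.

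For Item 1 you derive the norm identity $\int_{-\alpha_n}^\infty \Ai^2 = \Ai'(-\alpha_n)^2$ directly from the Lagrange/Wronskian-type integral of $(\Ai'^2 - x\Ai^2)' = -\Ai^2$; the paper simply cites Vall\'ee--Soares. Both are fine, and you are right to flag the sign: with the paper's definition $\psi_n = \Ai(\cdot - \alpha_n)/\|\Ai(\cdot - \alpha_n)\|_2$ one actually gets $\psi_n'(0) = (-1)^{n+1}$, so the unqualified ``$\psi_n'(0) = 1$'' in the lemma statement is a slight abuse of notation; only $|\psi_n'(0)| = 1$ is used downstream (the sum in Lemma~2.1 is of absolute values).

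For Item 3 you introduce the Lyapunov functional $E(x) = \psi_n'(x)^2 + (\alpha_n - x)\psi_n(x)^2$ and exploit $E' = -\psi_n^2 \le 0$ with $E(0) = 1$ to get $|\psi_n'| \le 1$ on $[0,\alpha_n]$, then handle $[\alpha_n,\infty)$ by the monotonicity of $\Ai$ on $\R_+$. The paper instead observes that the critical points of $\Ai'$ are the zeros of $\Ai$ and the origin, and combines the monotonicity of $n \mapsto |\Ai'(-\alpha_n)|$ (a consequence of Item 1) with the fact $|\Ai'(0)| < |\Ai'(-\alpha_1)|$ to conclude $|\Ai'(x)| \le |\Ai'(-\alpha_n)|$ for all $x \ge -\alpha_n$, equivalently $|\psi_n'| \le 1$ on all of $[0,\infty)$, then integrates. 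Your energy argument is in some sense more robust (it generalizes easily to perturbed potentials and does not require the explicit enumeration of critical points); the paper's argument is shorter and handles $[0,\infty)$ in one step, whereas you need the separate case split at $\alpha_n$.

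For Item 4, the paper bounds the near-origin part $\langle|\psi_n|\Ind_{x\le\alpha_n},x\rangle$ directly via Item 3 (obtaining a power of $\alpha_n$ and hence a power of $n$), and the far part via the super-exponential decay of $\Ai$ on $\R_+$ and the monotonicity of $|\Ai'(-\alpha_n)|$ (giving $O(\alpha_n)$). You instead use the effective envelope $|\Ai(-t)| \lesssim t^{-1/4}$ on the oscillatory range together with the lower bound $|\Ai'(-\alpha_n)| \gtrsim \alpha_n^{1/4}$, arriving at the sharper estimate $O(\alpha_n^{3/2}) = O(n)$. (One could also obtain $O(n)$ more cheaply from Cauchy--Schwarz, since $\|\psi_n\|_2 = 1$ and $\|x\Ind_{[0,\alpha_n]}\|_2 = (\alpha_n^3/3)^{1/2}$; this sidesteps the quantitative WKB inputs entirely.) What your approach buys is a strictly better exponent; what it costs is that you must check the envelope and the lower bound on $|\Ai'(-\alpha_n)|$ are uniform in $n$, which you correctly note and dispose of by direct inspection of the finitely many small $n$. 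Since the lemma is only used to ensure that $\sum_n e^{-C'n^{2/3}}\langle|\psi_n|,x\rangle$ converges, any polynomial bound suffices, so either version does the job. No gaps.
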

\begin{proof}
The first and second points are \cite[(4.52) and (2.52)]{Vallee}, respectively. For the third point, we first note that since $\Ai''(x) = x\Ai(x)$, the local extrema of $\Ai'$ on $\R$ are exactly the zeros of $\Ai$ and the origin. Furthermore, by the first point of the lemma, $|\Ai'(-\alpha_n)|$ is increasing in $n$ and by \cite[(3.50)]{Vallee}, $|\Ai'(0)| < |\Ai'(-\alpha_1)|$. This yields $|\Ai'(x)|\le|\Ai'(-\alpha_n)|$ for all $x \ge -\alpha_n$, from which the third point of the lemma follows. 

The third point of the lemma in particular implies $\langle|\psi_n|\Ind_{x\le \alpha_n},x\rangle \le \alpha_n^2/2$ for all $n$. Now,
$$\langle |\psi_n|\Ind_{x\ge \alpha_n},x\rangle = \|\Ai(\cdot-\alpha_n)\|_{2}^{-1}\langle |\Ai|,x+\alpha_n\rangle \le \|\Ai\|_{2}^{-1}\langle |\Ai|,x+\alpha_n\rangle.$$
By the tail bound $\Ai(x)\le \exp(-(2/3)x^{3/2})$ for large $x$ \cite[10.4.59]{AbrSteg}, the expression on the right-hand side of the last inequality is finite, whence $\langle |\psi_n|\Ind_{x\ge \alpha_n},x\rangle \le  C\alpha_n$, for some numerical constant $C$.
Applying the second point of the lemma shows the fourth point.
\end{proof}

We get back to the equation \eqref{eq:pde_canonical}. Define for a constant $q$ the operator $L_qu = u_{xx} - qxu$. One easily checks that the function $\psi^q_n(x) = q^{1/6}\psi_n(q^{1/3}x)$ is an eigenfunction of $L_q$ with eigenvalue $-\alpha_nq^{2/3}$ and the functions $\psi^q_n$ form an ONB of $L^2([0,\infty))$. We further denote by \( g(x,y;t):=g(x,y;0,t)\)  
the fundamental solution of \eqref{eq:pde_canonical}.
\begin{proposition}
\label{prop:fund_estimate}
Set $Q_1 = \inf_{t\in[0,1]} q(t)^{2/3}$ and $Q_2 = \sup_{t\in[0,1]}|(\log q)'(t)|$.  Suppose $Q_1 > 0$. 
Then there exists \(C_0=C_0(Q_{1}^{-1},Q_{2})>0\)  depending continuously on its parameters, such that for all  $\ep>0$, \(t\in[4\varepsilon,1]\) and $\delta \in [\ep,\sqrt\ep]$ there exist 
\begin{itemize}
 \item \((c_{*n})_{n\geq2}\) and \((c_{n}^{*})_{n\geq2}\)  with  \(|c_{*n}|\vee |c_{n}^{*}|\leq C_0\exp(-C_0^{-1}(t \wedge \delta)\ep^{-1} n^{2/3})\), 
 \item \(q_{*}(t)\le q(t) \le q^*(t)\) with \( q^*(t)-q_{*}(t)\le 2(t\wedge \delta)^2\ep^{-1} \sup_{t\in[0,1]}|q'(t)|,\) 
 \item $C_1=C_1(\ep,\delta,C_0)>1$ depending continuously on its parameters and satisfying $C_1\to1$ as $\ep\to 0$ and $\delta/\sqrt{\ep}\to 0$,
\end{itemize}
such that for all $x\in[0,1]$,  \[C_1^{-1}\left(\psi_1^{q^*(t)} +\varepsilon \sum_{n=2}^\infty c_n^*\psi_n^{q^*(t)}\right)\le \frac{ g(x,\cdot;t)}{\psi_1^{q(0)}(x)}\exp\left(\ep^{-1} \alpha_1 \int_0^t q(s)^{2/3}\,\dd s\right) \le C_1\left(\psi_1^{q_{*}(t)} +\varepsilon \sum_{n=2}^\infty c_{*n}\psi_n^{q_{*}(t)}\right).\]
\end{proposition}

Before providing the proof of Proposition \ref{prop:fund_estimate}, we derive
some a-priori estimates on solutions of \eqref{eq:pde_canonical}.
\begin{lemma}
\label{prop:pde}
Define $Q_1$ and $Q_2$ as in Proposition~\ref{prop:fund_estimate} and assume $Q_1>0$. Let $w(t,x)$ be the solution to \eqref{eq:pde_canonical} with initial condition satisfying $\|w(0,\cdot)\|_2 \le 1$. Define for each $t\ge0$ the function $W_t(x) = \exp(\int_0^t \ep^{-1} \alpha_1 q(s)^{2/3}\,\dd s)w(t,x).$  Then there exist numerical constants $C,C_{1}$, such that  for all \(t\in[0,1]\), 
\begin{enumerate}
        \item $\|W_t\|_2 \le 1$,
        \item $|\langle W_t,\psi^{q(t)}_1\rangle - \langle W_0,\psi^{q(0)}_1\rangle| \le C\frac{Q_2+1}{Q_1}\ep$ and
        \item $\left(\sum_{n\ge 2} \langle W_t,\psi^{q(t)}_n\rangle^2\right)^{1/2} \le C\frac{Q_2+1}{Q_1}\ep\left(\frac{Q_2+1}{Q_1}\ep+|\langle W_0,\psi^{q(0)}_1\rangle|\right)+\exp(-C_1\ep^{-1}Q_1 t)$.
\end{enumerate}
\end{lemma}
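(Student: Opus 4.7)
The plan is to make the substitution in the statement so that $W_t$ satisfies $\partial_t W_t = \varepsilon^{-1} M_t W_t$ with $M_t := L_{q(t)} + \alpha_1 q(t)^{2/3}$, a self-adjoint operator on $L^2(0,\infty)$ (Dirichlet at $0$) whose eigenvalues $(\alpha_1-\alpha_n)q(t)^{2/3}$ are all non-positive, vanishing only for the ground mode $n=1$ with eigenfunction $\psi_1^{q(t)}$. Point 1 is then the energy identity $\tfrac{d}{dt}\|W_t\|_2^2 = 2\varepsilon^{-1}\langle W_t, M_t W_t\rangle \le 0$; the boundary terms arising in the integration by parts vanish by the Dirichlet condition at $0$ and by decay of $W_t$ at $\infty$, while non-positivity of the quadratic form is the variational characterization of the ground-state eigenvalue.

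For points 2 and 3, I would expand $W_t = \sum_n a_n(t)\psi_n^{q(t)}$ and project the PDE onto the instantaneous eigenbasis to obtain the coupled system
\[
a_m'(t) = -\varepsilon^{-1}(\alpha_m-\alpha_1)q(t)^{2/3} a_m(t) + \sum_n c_{mn}(t)\,a_n(t),
\]
where the coupling coefficients $c_{mn}(t) := \langle \psi_m^{q(t)}, \partial_t \psi_n^{q(t)}\rangle$ are antisymmetric (differentiate the orthonormality relation in $t$), so in particular $c_{nn}=0$. Using the scaling $\psi_n^q(x) = q^{1/6}\psi_n(q^{1/3}x)$ together with $|(\log q)'|\le Q_2$ and $q\ge Q_1^{3/2}$, the super-exponential decay of $\psi_1$ at infinity yields $\|\partial_t\psi_1^{q(t)}\|_2 \le C\tilde Q$ with $\tilde Q := (Q_2+1)/Q_1$, and Parseval gives $\sum_m c_{1m}^2 \le C\tilde Q^2$.

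The key observation for point 3 is that when one differentiates $S(t) := \sum_{n\ge 2} a_n(t)^2$, antisymmetry of $(c_{nm})$ kills all cross-terms with $n,m\ge 2$, leaving only the coupling to the ground mode:
\[
\dot S(t) = -2\varepsilon^{-1}\sum_{n\ge 2}(\alpha_n-\alpha_1)q(t)^{2/3} a_n(t)^2 + 2 a_1(t) \sum_{n\ge 2} c_{n1}(t)\,a_n(t).
\]
Bounding the first term via the spectral gap by $-2\varepsilon^{-1}(\alpha_2-\alpha_1)Q_1\, S$ and the second by Cauchy-Schwarz, one gets, on $\{R>0\}$ with $R := \sqrt S$, the differential inequality $R'(t) \le -c_*\varepsilon^{-1}R(t) + C\tilde Q\,|a_1(t)|$, with $c_* := (\alpha_2-\alpha_1)Q_1$. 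Since $c_{11}=0$, the equation for $a_1$ reduces to $a_1'(t) = \sum_{m\ge 2} c_{1m}(t)\, a_m(t)$, giving $|a_1'(t)| \le C\tilde Q\, R(t)$ and hence $|a_1(s)| \le |a_1(0)| + C\tilde Q\int_0^s R$. Inserting this into the variation-of-constants formula for $R$ and closing with a Gronwall step (using $R(0)\le 1$) yields point 3; point 2 follows at once by integrating $|a_1'|\le C\tilde Q R$ and plugging in the resulting bound on $\int_0^t R$.

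The main technical obstacle I anticipate is the bootstrap in the last step: a single naive pass through variation-of-constants only produces $R(t) \le e^{-c_*\varepsilon^{-1}t} + C\tilde Q\varepsilon$, whereas the claimed bound has the sharper structure $\tilde Q\varepsilon(\tilde Q\varepsilon + |a_1(0)|)$, with a genuinely \emph{quadratic} $\varepsilon^2$ term when $|a_1(0)|$ is small. Extracting this requires re-substituting the intermediate bound on $|a_1|$ into the Duhamel formula and tracking precisely which contributions pick up an additional factor of $\tilde Q\varepsilon$. A subsidiary technicality is verifying the $L^2$-estimate on $\partial_t\psi_1^{q(t)}$ with the claimed dependence on $Q_1,Q_2$ alone, which relies on both the explicit $q$-scaling of $\psi_1^q$ and the super-exponential tails of $\psi_1$; without this the decay rate of $R$ cannot be traded against the coupling term uniformly in $t$.
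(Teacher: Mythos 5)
Your proposal is essentially the paper's own proof: expand $W_t$ in the instantaneous eigenbasis $\{\psi_n^{q(t)}\}$, observe that the time-dependence of the basis produces an antisymmetric coupling matrix $A(t) = (\log q)'(t)A$ (the paper computes $A = \tfrac16(I + 2(x\psi_i',\psi_j))$ and bounds $\|(A_{1j})_{j\ge 2}\|_2$ via Parseval, matching your $L^2$ estimate on $\partial_t\psi_1^{q(t)}$), get the energy identity for point 1 from the diagonal part being negative semi-definite, derive the differential inequality $\frac{d}{dt}\|\bar{\mathbf c}(t)\|_2 \le -C_1\ep^{-1}Q_1\|\bar{\mathbf c}(t)\|_2 + C_2Q_2|c_1(t)|$ for the non-ground modes, and close with Gr\"onwall. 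The bootstrap you correctly identify as the key technical point is exactly how the paper extracts the quadratic structure in point 3: it first proves point 2 using the trivial bound $\sup|c_1|\le 1$ in the Gr\"onwall estimate, and then re-inserts the resulting bound $\sup|c_1(s)|\le |c_1(0)| + C\tilde Q\ep$ back into the same estimate.
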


\begin{proof}
After decomposing the solution of
\eqref{eq:pde_canonical}
in the eigen-basis determined by the Airy functions, the proof
proceeds by analyzing a coupled system of linear, time inhomogeneous, ordinary
differential equations.

Throughout the proof, $C$, $C_1$ and $C_2$ are some numerical constants which may change from line to line. Define the vector $\mathbf c(t) = (c_1(t),c_2(t),\ldots)^T$, where $c_n(t) = \langle W_t,\psi^{q(t)}_n\rangle$. From \eqref{eq:pde_canonical}, one gets 
\[
\dot c_n(t) =  -\ep^{-1}(\alpha_n-\alpha_1) q(t)^{2/3}c_n(t) + \sum_{k\ge 1} c_k(t) q'(t) \langle \psi^{q(t)}_k, \frac{\dd}{\dd \tilde q} \psi^{\tilde q}_n\Big|_{\tilde q=q(t)}\rangle,
\]
whence 
\begin{equation}
\label{eq:ode_c}
\dot {\mathbf c}(t) = (D(t) + A(t))\mathbf c(t),\quad D(t) = -\ep^{-1}q(t)^{2/3} \operatorname{diag}(\alpha_i-\alpha_1)_{i\ge 1},\quad A(t) = (\log q)'(t) A.
\end{equation}
Here, $A$ is the antisymmetric matrix
\[
A = \tfrac 1 6 \left(I + 2( \langle x\psi_i',\psi_j\rangle )_{i,j\ge 1}\right) = \tfrac 1 6 ( \langle x\psi_i',\psi_j\rangle - \langle x \psi_j',\psi_i\rangle )_{i,j\ge 1},
\]
where the equality is easily verified by integration by parts\footnote{In fact, $A_{ij} = 2 (-1)^{i+j}(\alpha_i - \alpha_j)^{-3}$ for $i\ne j$, which can be easily verified by the equation \cite[(3.54)]{Vallee}, however, we will not use this fact.}. 

Since $D(t)+A(t)$ and $D(t')+A(t')$ do not commute unless $q(t)^{2/3} (\log q)'(t') = q(t')^{2/3} (\log q)'(t)$, there is no obvious explicit expression for the solution to \eqref{eq:ode_c}. However, since $D$ is diagonal and $A$ antisymmetric, we have
\[
\frac{\dd}{\dd t} \|\mathbf c(t)\|^2_2 = \frac{\dd}{\dd t} \mathbf c^T(t) \mathbf c(t) = \mathbf c^T(t)(D^T(t)+A^T(t)+D(t)+A(t)) \mathbf c(t) = 2 \mathbf c^T(t) D(t) \mathbf c(t) \le 0,
\]
 by the positivity of $q(t)$. This implies the first claim. In particular, $|c_1(t)| \le 1$ for all $t\ge 0$. Setting $\bar{\mathbf c}(t) = (0,c_2(t),c_3(t),\ldots)^T$, the previous equation yields,
\begin{align*}
\frac{\dd}{\dd t} \bar{\mathbf c}^T(t) \bar{\mathbf c}(t) &= 2 \bar{\mathbf c}^T(t) D(t) \bar{\mathbf c}(t) - 2c_1(t)\sum_{j=2}^\infty A_{1j}(t)c_j(t)\\
&\le -2\ep^{-1}q(t)^{2/3}(\alpha_2-\alpha_1) \bar{\mathbf c}^T(t ) \bar{\mathbf c}(t) + 2|c_1(t)|\,\|(A_{1j}(t))_{j\ge 2}\|_2 \|\bar{\mathbf c}(t)\|_2,
\end{align*}
by the Cauchy--Schwarz inequality. By Parseval's formula, $\|(A_{1j})_{j\ge 2}\|_2 \le \|x\psi_1'\|_2/3<\infty$. This yields
\[
\frac{\dd}{\dd t} \|\bar{\mathbf c}(t)\|_2 \le -C_1\ep^{-1}Q_1\|\bar{\mathbf c}(t)\|_2 + C_2 Q_2|c_1(t)|.
\]
Note that the general solution to the equation \(f'(t)=-af(t)+b\) is \(f(t)=(b/a)+\widetilde Ce^{-at},\) for arbitrary $\widetilde C\in\R$. Since $\bar{\mathbf c}(0) \le 1$, Gr\"onwall's inequality 
now yields that 
\begin{equation}
\label{eq:387}
\|\bar{\mathbf c}(t)\|_2 \le C(Q_2/Q_1)\ep\sup_{s\in[0,1]}|c_{1}(s)| + \exp(-C_1\ep^{-1}Q_1 t),
\end{equation}
 In order to show the second claim, we note that by \eqref{eq:ode_c},
for every \(t\in[0,1],\)\[
\left|c_1(t)-c_1(0)\right| \le \int_0^t \left|\sum_{j=2}^\infty A_{1j}(t)c_j(t)\right|\,\dd t \le C \int_0^t \|\bar{\mathbf c}(t)\|_2\,\dd t,
\]
where the last inequality follows from the Cauchy--Schwarz inequality as above. Together with \eqref{eq:387} and the fact that \(\sup_{t\in[0,1]}|c_{1}(t)|\leq1\), this implies the second claim.
The third claim follows from this, together with  \eqref{eq:387}.
\end{proof}

%

\begin{proof}[Proof of Proposition~\ref{prop:fund_estimate}]
Fix \(t\in[4\varepsilon,1]\) and $\delta\in[\ep,t-3\ep]$. We can construct $q^*,q_*\in C^1([0,1])$ such that the following holds:
\begin{itemize}
\item $Q_{1}\le\ q_*\le q \le q^*$
\item \(q_*\equiv q\equiv q^{*}\) on \([2\varepsilon,t-\varepsilon-\delta],\)
\item \(q^{*}\) and \(q_{*}\) are constant on \([0,\varepsilon]\cup[t-\delta,t]\),
\item \(\sup_{s\in[0,1]}\max\{|(\log q_{*})'(s)|,|(\log q^{*})'(s)|\}\leq Q_{2}\) and
\item \(q^{*}-q_{*}\leq2(t\wedge\delta)^2\varepsilon^{-1} \sup_{t\in[0,1]}|q'(t)|\).
\end{itemize}
 Now let $x\in[0,1]$. Let $w^*$ and $w_*$ denote the solutions to \eqref{eq:pde_canonical} with $q$ replaced by $q^*$ or $q_*$, respectively, and with initial condition \(w^{*}(0,\cdot) = w_{*}(0,\cdot) = \delta(\cdot-x)\). By the parabolic maximum principle \cite[Theorem~7.1.9]{Evans}, we then have $w^{*}(t',y)\le g(x,y;t')\le w_*(t',y)$ for all $y\ge0$ and $t'\in[0,1]$.

Write $W^*_{t'}(y) = w^*(t',y)\exp(\ep^{-1} \alpha_1\int_0^{t'}  q^*(s)^{2/3}\,\dd s)$ for all $t',y$.  For every $n$, we have by the first point of Lemma~\ref{lem-bullets} and the fact that $\psi_1(x)>0$ for all $x>0$,\[|\langle W^{*}_0,\psi_n^{q^*(0)}\rangle|= |\psi_n^{q^*(0)}(x)| \le C\psi_1^{q(0)}(x),\] for some constant \(C\) depending on \(Q_{1}\). By \eqref{eq:ode_c}, we then have \[|\langle W^{*}_\ep,\psi_n^{q^*(0)}\rangle|\le C\exp(-(\alpha_n-\alpha_1)q^*(0)^{2/3})\psi_1^{q(0)}(x),\]
for every $n$, since the off-diagonal terms cancel by the fact that $q^*$ is constant on $[0,\ep]$. Together with the second point of Lemma~\ref{lem-bullets}, this yields \(\|W^{*}_\ep\|_2 \le C_1\) for some constant $C_1$ as $\ep$ is small enough. Furthermore, 
\[\langle W^{*}_\ep,\psi_1^{q^*(0)}\rangle = \langle W^{*}_0,\psi_1^{q^*(0)}\rangle=(1+o(1))\psi_1^{q(0)}(x),\] 
where $o(1)$ is a term depending on $Q_1$ and $Q_2$ which vanishes as $\ep \to 0$. Applying Lemma~\ref{prop:pde} with initial condition $w(0,\cdot) =  W^{*}_\ep/C_1$, we get that \(\langle W^{*}_{t},\psi_1^{q^*(t)}\rangle=(1+o(1))\psi_1^{q(0)}(x)\) and $\left(\sum_{n\ge 2} \langle W^*_{t-\delta},\psi^{q^*(t)}_n\rangle^2\right)^{1/2} \le C_2\ep \psi_1^{q(0)}(x)$ for small $\ep$, where $C_2$ depends on $Q_1$ and $Q_2$. As above, this now implies that for every $n\ge 2$, for small $\ep$,
\[
|\langle W^*_t,\psi_n^{q^*(0)}\rangle| \leq\exp(-CQ_1(t\wedge\delta)\ep^{-1}n^{2/3})C_2\ep \psi_1^{q(0)}(x). 
\]
Together with the previous estimates, this finally yields the existence of a sequence of constants \((c_{n}^{*})_{n\geq2}\) with \(|c_{n}^{*}|\leq\exp(-C_{}Q_1(t\wedge\delta)\ep^{-1}n^{2/3})C_2,\) such that as \(\ep\to0,\) 
\[
w^*(t,\cdot) \geq (1+o(1))\exp\left(-\ep^{-1} \alpha_1 \int_0^t q^*(s)^{2/3}\,\dd s\right) \psi_1^{q(0)}(x)\left(\psi_1^{q^*(t)} +\varepsilon \sum_{n=2}^\infty c_n^*\psi_n^{q^*(t)}\right).
\]
An analogous formula holds for $w_*.$ The statement now follows from the fact that $\int_0^1 q^*(s)^{2/3}-q_*(s)^{2/3}\,\dd s = O((t\wedge\delta)^{2})$ by construction.
\end{proof}

Fix \(T>0.\) 
The results obtained in the current
section can be easily transported to the  following PDE on $[0,T]\times \R_+$, encountered in Sections~\ref{sec-3} and \ref{sec-4}.
 \begin{equation}
 \label{eq:pde}
  u_t(t,x) = \tfrac 1 2 \sigma^2(t/T)  u_{xx}(t,x) +\{- T^{-1}Q(t)x +T^{-2/3}\alpha_1Q(t)^{2/3}\left( \tfrac 1 2 \sigma^2(t/T) \right)^{1/3}\} u(t,x),
 \end{equation}
 with Dirichlet boundary condition at 0 and where $Q\in C^1([0,T])$ with $Q(t)>0$ for all $t\in[0,T]$.
Setting $J(t) = \int_0^t \tfrac 1 2 \sigma(s)^2\,\dd s$  as in 
\eqref{eq-J}, defining $q(t)$ by \(q(J(t)/J(1)) = 2Q(t)/\sigma^2(t)\), 
and changing variables by 
\[
u(t,x) = w(J(t/T)/J(1),T^{-1/3}x)\exp\left(J(1)T^{1/3}\alpha_1\int_0^{J(t/T)/J(1)}q(s)^{2/3}\,\dd s\right),
\] we see that the function $w(t,x)$ solves  \eqref{eq:pde_canonical} on \([0,1]\times\R_+\) with \(\ep^{-1} = J(1)T^{1/3}\) and the $q(t)$ defined here.
In particular, if $G(x,y;t):=G(x,y; 0,t)$ and \(g(x,y;t)=g(x,y;0,t) \) denote the fundamental solutions of \eqref{eq:pde}
 and \eqref{eq:pde_canonical}, respectively, then we have the relation
 \begin{multline}
 \label{eq:green}
 G(x,y;t) = T^{-1/3}g\left(T^{-1/3}x,T^{-1/3}y;J(t/T)/J(1)\right)\\
 \times\exp\left(J(1)T^{1/3}\alpha_1\int_0^{J(t/T)/J(1)}q(s)^{2/3}\,\dd s\right).
 \end{multline}
 
The following estimates on $G(x,y;t)$ are used in the main text. Both are corollaries of Proposition~\ref{prop:fund_estimate}.
Recall the constant $\kappa$ from \eqref{eq:s_0}.

\begin{corollary}
 \label{cor:G_estimate_1}
 For large $T$, we have for all 
$x,y\geq 0$ and 
$t\in[\kappa T^{2/3},T]$, 
\begin{equation}
\label{eq:ihp1} 
 C_0^{-1} T^{-1} xy \Ind_{x,y\leq T^{1/3}} \le G(x,y;t) \le C_0 T^{-1} xy,
\end{equation} 
 where $C_0>0$ depends continuously on $\sigma(0)$, $\sigma(1)^{-1}$, $(\inf_{t\in[0,1]}q(t))^{-1}$, $\sup_{t\in[0,1]}|q'(t)|$ and $q(0)$. 
In particular, with the same assumptions,
\begin{equation}
\label{eq:ihp2} 
   \frac \dd {\dd y} G(x,y;t)\Big|_{y=0} \le C_0 T^{-1} x.
\end{equation} 
\end{corollary}

\begin{corollary}
 \label{cor:G_estimate_2}
 For large $T$, we have for all 
$x\geq 0$,
$t\in[\kappa T^{2/3},T]$,
\begin{equation}
\label{eq:ihp3} 
  \int_0^\infty G(x,y;t) y\,\dd y \le C_0 T^{-1} x,
\end{equation} 
 where $C_0$ is as in the previous corollary.
\end{corollary}

\begin{proof}[Proof of Corollary~\ref{cor:G_estimate_1}]
Throughout the proof, we will use the fact that $|\psi_n^q(x)| \le \sqrt q x$ for every $q\ge0$, $x\ge 0$ and $n\in\N^*$, by the third part of Lemma~\ref{lem-bullets}. Note that for $t\ge \kappa T^{2/3}$, we have with $\ep^{-1} = J(1)T^{1/3}$,
\[
 J(t/T)/J(1) = J(1)^{-1}\int_0^{t/T}\tfrac 1 2 \sigma^2(s)\,\dd s \ge J(1)^{-1} \tfrac 1 2 \sigma^2(1) t/T \ge \tfrac 1 2 \sigma^2(1) \kappa \ep = 4\ep,
\]
where the first inequality follows from the fact that $\sigma^2$ is a decreasing function and the last equality follows from the definition of $\kappa$ in \eqref{eq:s_0}.
By \eqref{eq:green} and Proposition~\ref{prop:fund_estimate}, with the notation introduced there, we then have for every $x,y\ge0$,
\begin{align}
\nonumber
 G(x,y;t) &\lesssim T^{-1/3} \psi_1^{q(0)}(T^{-1/3}x)\left(\psi_1^{q_{*}(t)}(T^{-1/3}y) + \ep \sum_{n=2}^\infty c_{*n}|\psi_n^{q_{*}(t)}(T^{-1/3}y)|\right)\\
\label{eq:genevieve}
 &\le C_0 T^{-1} xy\sqrt {q(0)} \sup_{t\in[0,1]}\sqrt {q(t)}.
\end{align}
Here, $C_0$ is a constant as in the statement of the corollary; this follows from the fact that the quantities $Q_1^{-1}$ and $Q_2$ in Proposition~\ref{prop:fund_estimate} can be expressed as
\[
 Q_1^{-1} = \left(\inf_{t\in[0,1]}q(t)\right)^{-2/3},\quad Q_2 = \sup_{t\in[0,1]}|q'(t)/q(t)| \le \frac{\sup_{t\in[0,1]}|q'(t)|}{\inf_{t\in[0,1]}q(t)}.
\]
Together with the fact that $\sup_{t\in[0,1]} q(t) \le q(0) + \sup_{t\in[0,1]}|q'(t)|$, Equation \eqref{eq:genevieve} now implies the right-hand inequality of \eqref{eq:ihp1}.

As for the left-hand inequality in \eqref{eq:ihp2}, we have by Proposition~\ref{prop:fund_estimate}, again with the notation introduced there, for every $x,y\ge0$,
\begin{align}
\label{eq:blase}
 G(x,y;t) &\gtrsim T^{-1/3} \psi_1^{q(0)}(T^{-1/3}x)\left(\psi_1^{q^*(t)}(T^{-1/3}y) -\varepsilon \sum_{n=2}^\infty c_n^*|\psi_n^{q^*(t)}(T^{-1/3}y)|\right).
\end{align}
Now note that the function $\psi_1$ is by definition (strictly) positive on $(0,\infty)$ and continuous on $[0,\infty)$. Furthermore, $\psi_1'(0) = 1$ by the first part of Lemma~\ref{lem-bullets}. This implies that the function $x\mapsto \psi_1(x)/x$ can be extended to a continuous and strictly positive function on $[0,\infty)$. In particular,  for every $q>0$, $\psi_1(x) \ge C^{-1} x$ for all $x\in[0,q]$, where $C = \inf_{x\in[0,q]}\psi_1(x)/x>0$ depends continuously on $q$.

Letting $\ep \to 0$ (i.e. $T\to\infty$), the left-hand inequality of \eqref{eq:ihp1} now readily follows from \eqref{eq:blase} by a reasoning similar to the one used above for the right-hand inequality of \eqref{eq:ihp1}, taking into the account the above lower bound on $\psi_1$.

Equation \eqref{eq:ihp2} immediately follows from \eqref{eq:ihp1}.
\end{proof}

\begin{proof}[Proof of Corollary~\ref{cor:G_estimate_2}]
 Similar to the proof of the last corollary, using in addition the fourth part of Lemma~\ref{lem-bullets}. We omit the details.
\end{proof}




\section{Convergence of the derivative Gibbs measure of (time--homogeneous) branching Brownian motion}
\label{sec:derivative_Gibbs}

In this section, we consider branching Brownian motion with (time-homogeneous) variance $\sigma^2 = 1$, drift $+1$ and reproduction law and branching rate as before. In particular, the left-most particle drifts off to $+\infty$ with zero speed, i.e.\ if $M_t = \min_{u\in\mathcal N(t)}X_u(t)$, then almost surely, as $t\to\infty$, $M_t/t\to 0$ and $M_t\to+\infty$ \cite{Bramson78}. Define the \emph{derivative Gibbs measure} at time $t$:
\[
\mu_t = \sum_{u\in\mathcal N(t)} X_u(t) e^{-X_u(t)}\delta_{X_u(t)/\sqrt t}
\]
The quantity $D_t = \int 1 \dd \mu_t$ is then known as the \emph{derivative martingale}, and it is known \cite{LS87,Neveu,YR11} that $D_t$ converges almost surely as $t\to\infty$ to a (strictly) positive limit $D_\infty$ whose Laplace transform is given by $\E[\exp(-e^{-x}D_\infty)] = \phi(x)$, where $\phi$ is a solution to \eqref{eq:fkpp}.

Let $\rho$ denote the law of a BES(3) process at time 1, started at 0, i.e.\ 
\[
\rho(\dd x) = \sqrt{\frac 2 \pi}\,x^2 e^{-x^2/2}\Ind_{x\ge 0}\,\dd x.
\]
\begin{theorem}
\label{th:Gibbs}
In probability, $\mu_t$ converges weakly to $D_\infty \rho.$ Moreover, for every family $(f_t)_{t\ge 0}$ of uniformly bounded measurable functions (i.e.\ $\sup_{t,x}|f_t(x)| < \infty$), we have 
\[
\int f_t \,\dd\mu_t - D_\infty \int f_t \,\dd \rho \to 0,\quad\text{in probability.}
\]
\end{theorem}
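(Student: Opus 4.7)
The plan is to use a truncation plus spine change-of-measure argument, following the classical strategy for the a.s.\ convergence of the derivative martingale \cite{LS87,YR11}. For $K>0$, introduce the truncated analogue
\[
\mu_t^{(K)} = \sum_{u \in \mathcal N(t)} (X_u(t)+K)\, e^{-X_u(t)}\, \Ind_{\min_{s \le t} X_u(s) \ge -K}\, \delta_{X_u(t)/\sqrt{t}},
\]
whose total mass $D_t^{(K)} = \int 1\,\dd\mu_t^{(K)}$ is a nonnegative martingale of mean $K$ converging in $L^1$ to a limit $D_\infty^{(K)}$ that coincides with $D_\infty$ on the event that no particle ever falls below $-K$; this event has probability tending to $1$ as $K \to \infty$ since $M_t \to +\infty$ a.s. It therefore suffices to prove, for each fixed $K$, that $\int f_t\,\dd\mu_t^{(K)} - D_t^{(K)} \int f_t\,\dd\rho \to 0$ in probability, uniformly over families with bounded $\sup_{t,x}|f_t(x)|$, and then pass to the limit $K \to \infty$.

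For the mean, a many-to-one computation combined with the Girsanov shift that removes the drift $+1$ and Doob's $h$-transform with $h(x)=x+K$ that handles the killing at level $-K$ via a three-dimensional Bessel process yields
\[
\E\!\left[\int f_t\,\dd\mu_t^{(K)}\right] = K \cdot E_K^{\mathrm{BES(3)}}\!\left[f_t\!\left(\frac{R_t - K}{\sqrt{t}}\right)\right],
\]
where $R$ is a BES(3) process started at $K$. Since $R_t/\sqrt{t}$ converges in distribution to $\rho$ as $t \to \infty$ and $K/\sqrt{t} \to 0$, this gives $\E[\int f_t\,\dd\mu_t^{(K)}] = K \int f_t\,\dd\rho + o(1)$, with error uniform in $\sup_{t,x}|f_t(x)|$ by a standard approximation argument using the absolute continuity of $\rho$.

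To upgrade the mean to convergence in probability, I would compute the second moment $\E[(\int f_t\,\dd\mu_t^{(K)} - D_t^{(K)}\int f_t\,\dd\rho)^2]$ using the many-to-two lemma, which expresses it as a diagonal single-particle term plus an integral over pair-splitting times $s \in [0,t]$ of a product of two one-particle expectations of the type above; equivalently, one may use the spine change of measure with density $D_t^{(K)}/K$, under which the shifted spine $\xi+K$ is a BES(3) started at $K$ and off-spine subtrees evolve as independent copies of the original BBM attached at the spine's branching times. The spine produces the $\rho$-distributed limit while off-spine contributions average out by a law-of-large-numbers-type argument. The main technical obstacle will be bounding the second moment with an error depending on $f_t$ only through $\sup_{t,x}|f_t(x)|$ (and not on any regularity of $f_t$), so that the uniform bound is preserved in the $t \to \infty$ limit; this forces the pair-integral estimate to be genuinely an $L^\infty$ bound rather than relying on continuity. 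Once this is in hand, taking $K \to \infty$ and using $D_\infty^{(K)} \to D_\infty$ yields both the weak convergence in probability and the stronger measurable-$f_t$ conclusion of the theorem.
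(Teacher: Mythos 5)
Your proposal is a genuine alternative route, not the paper's. You follow the classical Lalley--Sellke / Yang--Ren pattern: truncate the derivative measure at a fixed level $-K$, establish first and second moments directly via many-to-one and many-to-two, and only at the end let $K\to\infty$. The paper instead works with the measure $\mu_t^s$ obtained by keeping only particles whose paths stay nonnegative on $[s,t]$, conditions on $\F_s$, and studies the conditional Laplace transform $\E[e^{-\int f_t\,\dd\mu_t^s}\,|\,\F_s]$. The branching property turns this into a product over $u\in N(s)$ of one-tree factors, which is sandwiched using Jensen's inequality on one side and $e^{-y}\le 1-y+y^2\le e^{-y+y^2}$ on the other; the only second-moment input needed is the single-tree bound $\E^0_x[D_t^2]\le Ce^{-x}$ of Lemma~\ref{lem:478}, and the correction is controlled by $CW_s\to 0$ as $s\to\infty$. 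This cleanly sidesteps the off-diagonal pair integral altogether, and the ``uniform in $f_t$'' issue is handled by uniform convergence of the BES(3) density to that of $\rho$ as the start point tends to $0$.

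There is, however, a genuine gap in your proposal as written. Your upgrade from mean to probability rests on showing that the many-to-two pair integral for $\int(f_t-\int f_t\,\dd\rho)\,\dd\mu_t^{(K)}$ vanishes as $t\to\infty$ with $K$ fixed. The cancellation $\int g_t\,\dd\rho=0$ only suppresses pairs that split early; for pairs splitting close to time $t$ the two descendants are strongly correlated and no cancellation is available, so you must extract smallness from the weight factors alone, and this needs a quantitative estimate (depending on $f_t$ only through $\sup|f_t|$) that you name as the ``main technical obstacle'' but do not supply. This is precisely the step the paper's conditional-Laplace-transform structure avoids. Two smaller points: the $L^1$ convergence of $D_t^{(K)}$ should be cited, not just asserted, and the statement that $D_\infty^{(K)}=D_\infty$ on the event no particle drops below $-K$ uses $W_t\to 0$ a.s., which is worth making explicit since the truncated martingale carries the extra additive weight $K e^{-X_u(t)}$.
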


\begin{remark}
Convergence in probability of the Gibbs measure 
\[
\mu^*_t = \sqrt t \times \sum_{u\in\mathcal N(t)} e^{-X_u(t)}\delta_{X_u(t)/\sqrt t},
\]
has recently been shown by Madaule \cite{Madaule} for general branching random walks. While Theorem~\ref{th:Gibbs} (at least the first statement) could be in principle recovered from the results in \cite{Madaule} (see in particular Proposition~3.4 of that paper), we present below for completeness a fairly simple proof.
\end{remark}

\begin{proof}
Note that we can (and will) assume w.l.o.g.\ that $f_t \ge 0$ for each $t\ge0$.
For every $s \le t$, define the measure
\[
\mu_t^s = \sum_{u\in\mathcal N(t)} X_u(t) e^{-X_u(t)}\Ind_{(X_u(r)\ge 0\ \forall s\le r\le t)}\delta_{X_u(t)/\sqrt t}
\]
Since $\min_{u\in\mathcal N(t)}X_u(t)\to +\infty$ almost surely \cite{M75}, there exists a random time $S$, such that we have $\mu_t^s = \mu_t$ for all $S\le s\le t$. 
Since moreover $D_s\to D_\infty$ almost surely, as $s\to\infty$, 
it is enough to show that almost surely, for any family of nonnegative
functions $(f_t)_{t\ge0}$ as in the statement of the theorem,
\begin{equation}
\label{eq:30987}
\lim_{s\to\infty}\limsup_{t\to\infty} 
\left|\E[e^{-\int f_t \,\dd\mu^s_t}\,|\,\F_s] - 
e^{-D_s \int f_t \,\dd \rho}\right| = 0,
\quad a.s.\end{equation}

Let $s\le t$. Define $f_{s,t}(x) = f_t(x\sqrt{(t-s)/t})$. 
By the branching property and  Jensen's inequality,
\begin{equation}
\label{eq:1309876}
\E[e^{-\int f_t\dd\mu_{t}^s}\,|\,\F_s] = \prod_{u\in\mathcal N(s)} \E_{X_u(s)}[e^{-\int f_{s,t}\,\dd \mu^0_{t-s}}] \ge \exp\left(-\sum_{u\in\mathcal N(s)} \E_{X_u(s)}\left[\int f_{s,t}\dd\mu^0_{t-s}\right]\right),
\end{equation}
We now have for every $x\ge 0$, by the first moment formula for branching Markov processes \cite[Theorem~4.1]{INW3} and Girsanov's theorem, for every bounded measurable function $f$,
\begin{equation*}
\begin{split}
\E_x\left[\int f\dd\mu^0_t\right] &= e^{t/2}E_x[(B_t+t)e^{-(B_t+t)}f((B_t+t)/\sqrt t)\Ind_{(B_r\ge 0\ \forall r\le t)}]\\
 &= e^{-x}E_x[B_tf(B_t/\sqrt t)\Ind_{(B_r\ge 0\ \forall r\le t)}]\\
 &=xe^{-x} E_x[f(R_t/\sqrt t)] = xe^{-x} E_{x/\sqrt t}[f(R_1)],
\end{split}
\end{equation*}
where under $P_x$, $(R_t)_{t\ge 0}$ is a three-dimensional Bessel process started at $x$ \cite[Section~XI.1]{RevuzYor}. The law of $R_1$ under $P_x$ has a continuous density with respect to Lebesgue measure for every $x$ which converges uniformly to the density of $\rho$ as $x\to0$. It follows easily from this that for every $x\ge 0$,
\begin{equation}
\label{eq:5987}
\E_x\left[\int f_{s,t}\,\dd \mu^0_{t-s}\right] - xe^{-x} \int f_{t}\,\dd \rho \to 0,\quad\tas t\to\infty.
\end{equation}
 Equations \eqref{eq:1309876} and \eqref{eq:5987} 
 now yield the inequality ``$\ge$'' in \eqref{eq:30987}. 
 In order to obtain the other inequality, we have by Lemma~\ref{lem:478} below, for some constants $C,C'$,
\begin{equation}
\label{eq:8884}
\E_x\left[\left(\int f_{s,t}\,\dd \mu^0_{t-s}\right)^2\right] \le C'\E^0_x[D_{t-s}^2] \le Ce^{-x},
\end{equation}
where the superscript in $\E^0_x$ indicates that the particles are killed upon hitting the origin.
By the branching property and the inequalities $e^{-x} \le 1-x+x^2 \le e^{-x+x^2}$ for $x\ge 0$, we then get by \eqref{eq:8884},
\begin{align}
  \label{eq-exp22}
  \nonumber
\E[e^{-\int f_t\dd\mu_{t}^s}\,|\,\F_s] &\le \exp\left(
\sum_{u\in\mathcal N(s)} -\E_{X_u(s)}\left[\int f_{s,t}\dd\mu^0_{t-s}\right] 
+ \E_{X_u(s)}\left[\left(\int f_{s,t}\,\dd \mu^0_{t-s}\right)^2\right]\right)\\
&\le \exp\left(
-D_s \int f_{t}\,\dd \rho 
+ C W_s + E_{s,t}\right),
\end{align}
where $W_s = \sum_{u\in\mathcal N(s)} e^{-X_u(s)}$ and 
$$E_{s,t}=
\sum_{u\in\mathcal N(s)} -\E_{X_u(s)}\left[\int f_{s,t}\dd\mu^0_{t-s}\right] +
D_s \int f_{t}\,\dd \rho$$ 
is an $\F_s$-measurable term.
By \eqref{eq:8884} and the fact that $\rho$ has a continuous density with 
respect to Lebesgue's measure,
$E_{s,t}$ tends to zero almost surely, as $t\to\infty$, for each fixed $s$. 
Since $W_s\to0$ almost surely, as $s\to \infty$ 
(see e.g.\ \cite{LS87,Neveu}), 
the inequality \eqref{eq-exp22}
yields the inequality ``$\le$'' in \eqref{eq:30987}. 
This finishes the proof of the theorem.
\end{proof}

\begin{lemma}
\label{lem:478} Let $\E^0_x$ be the law of BBM as in the beginning of this section but where in addition particles are killed upon hitting the origin. For some constant $C$, $\E^0_x[D_{t}^2] \le Ce^{-x}$ for every $x\ge 0$ and $t\ge0$.
\end{lemma}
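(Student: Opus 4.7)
The plan is a many-to-two decomposition
\[
\E^0_x[D_t^2] = \E^0_x\Bigl[\sum_u X_u(t)^2 e^{-2X_u(t)}\Bigr] + \E^0_x\Bigl[\sum_{u\ne v} X_u(t)X_v(t)\, e^{-X_u(t)-X_v(t)}\Bigr],
\]
with each of the two pieces bounded separately by $Ce^{-x}$. In both, the reduction to a spine particle (many-to-one for the diagonal, many-to-two for the off-diagonal) produces expectations over a single Brownian motion $Y$ of drift $+1$ from $x$ killed at $0$. A Girsanov change of measure removes the drift, producing exactly a factor $e^{-x}$; then Doob's $h$-transform with the harmonic function $h(y)=y$ of BM killed at $0$ turns killed BM into a $\mathrm{BES}(3)$ process, coupling the moments to the killed heat kernel $p_s^{\mathrm{kill}}$ of BM and its Green function $G^{\mathrm{kill}}(x,y)=\int_0^\infty p_s^{\mathrm{kill}}(x,y)\,\dd s=2\min(x,y)$.

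For the diagonal, many-to-one and Girsanov produce $e^{-x}E_x^{\mathrm{BM}}[B_t^2 e^{-B_t}\Ind_{\tau_0>t}]$, which by the $h$-transform equals $e^{-x}\cdot x\,E_x^{\mathrm{BES}(3)}[R_t e^{-R_t}]$; since $y^2e^{-y}$ is uniformly bounded on $\R_+$, this is $\le Ce^{-x}$, uniformly in $x,t\ge 0$. For the off-diagonal, the many-to-two formula yields $C\int_0^t e^{s/2}E_x[g(Y_s)^2\Ind_{Y_r>0,\,\forall r\le s}]\,\dd s$ with $g(y):=\E^0_y[D_{t-s}]$. The same Girsanov$+h$-transform argument applied to a single particle on $[s,t]$ gives the clean identity $g(y)=ye^{-y}$ (ultimately using $E^{\mathrm{BM}}_y[B_{t-s}\Ind_{\tau_0>t-s}]=y$, via optional stopping on $B_{t\wedge\tau_0}$), and a second application on $[0,s]$ reduces the off-diagonal to
\[
Ce^{-x}\int_0^t E_x^{\mathrm{BM}}[B_s^2 e^{-B_s}\Ind_{\tau_0>s}]\,\dd s = Ce^{-x}\int_0^\infty y^2 e^{-y}\Bigl(\int_0^t p_s^{\mathrm{kill}}(x,y)\,\dd s\Bigr)\,\dd y
\]
by Fubini. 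The inner time-integral is at most $G^{\mathrm{kill}}(x,y)\le 2y$, so the outer integral is at most $2\int_0^\infty y^3 e^{-y}\,\dd y=12$, uniformly in $x,t$. Combined with the diagonal estimate this yields the lemma.

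The main point to be careful about is the bookkeeping of exponentials: the $e^{t/2}$ from the branching rate in many-to-one must cancel the $e^{-t/2}$ coming out of Girsanov (and similarly for $e^{\pm s/2}$ on the sub-intervals $[0,s]$ and $[s,t]$ in the many-to-two step), so that all $t$- and $s$-dependent exponentials disappear and only the $e^{-x}$ from Girsanov's exponential tilt survives. The key analytic input giving the $t$-uniform bound is that the Green function of BM killed at the origin is bounded by the linear function $2y$ independently of $x$ and $t$: without killing, the time-integral $\int_0^\infty E_x[B_s^2e^{-B_s}]\,\dd s$ would be infinite (it grows like $e^{s/2}$), so the killing at $0$ is precisely what makes the second moment of the derivative martingale finite and $e^{-x}$-decaying.
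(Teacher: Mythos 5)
Your proof is correct and follows the same overall skeleton as the paper's: a many-to-two decomposition into a diagonal piece and a time-integrated off-diagonal piece; the martingale property of $D$ under $\P^0_y$ giving $\E^0_y[D_{t-s}]=ye^{-y}$; and many-to-one plus Girsanov to kill the drift and the $e^{s/2}$ branching factors simultaneously, leaving exactly the $e^{-x}$ prefactor and a driftless BM killed at $0$. The diagonal piece is handled identically in both (boundedness of $y^2e^{-y}$; your BES(3) $h$-transform aside is harmless but unnecessary for that term).

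Where you genuinely diverge is in the key uniform-in-$t$ estimate of the off-diagonal integral $\int_0^t E_x^{\mathrm{BM}}[B_s^2e^{-B_s}\Ind_{\tau_0>s}]\,\dd s$. The paper bounds $y^2e^{-y}\le C'e^{-y/2}$ and applies It\^o's formula to $e^{-B_s/2}$ on $[0,t\wedge\tau_0]$, turning the time-integral into $8\bigl(E_x[e^{-B_{t\wedge\tau_0}/2}]-e^{-x/2}\bigr)\le 8$. You instead Fubini against the killed heat kernel and invoke the Green function $G^{\mathrm{kill}}(x,y)=\int_0^\infty p_s^{\mathrm{kill}}(x,y)\,\dd s=2(x\wedge y)\le 2y$, reducing to $2\int_0^\infty y^3e^{-y}\,\dd y=12$. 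Both are short, standard, and give the same qualitative point: the occupation measure of killed BM weighted by a fast-decaying function of $y$ has finite total mass independently of $x$ and $t$. Your version makes the role of the killing more conceptually explicit (without killing the Green function is infinite, as you note); the paper's It\^o computation avoids any mention of the Green function but is slightly more ad hoc. Either is acceptable; the argument is sound.
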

\begin{proof}
We first note that $(D_t)_{t\ge 0}$ is a martingale as well under $\E^0_x$. In particular, $\E^0_x[D_t] = xe^{-x}$ for every $x\ge0$ and $t\ge 0$. By the second moment formula for branching Markov processes \cite[Theorem~4.15]{INW3}, this gives for some constant $C$,
\begin{align*}
\E^0_x[D_{t}^2] = \E^0_x\Big[\sum_{u\in\mathcal N(t)} X_u(t)^2 e^{-2X_u(t)}\Big] + C \E^0_x\Big[\int_0^t\sum_{u\in\mathcal N(s)} X_u(s)^2 e^{-2X_u(s)}\,\dd s\Big].
\end{align*}
By the first moment formula for branching Markov processes and Girsanov's theorem we get as in the proof of Theorem~\ref{th:Gibbs},
\begin{align*}
\E^0_x[D_{t}^2] = e^{-x}\left(E_x[B_t^2e^{-B_t}\Ind_{B_s \ge 0\,\forall s\le t}] + C E_x\Big[\int_0^{t\wedge T_0} B_s^2 e^{-B_s}\,\dd s \Big]\right),
\end{align*}
where $T_0$ is the first hitting time of the origin. The term in the first expectation is bounded by a constant. As for the second expectation, by the inequality $x^2e^{-x} \le C'e^{-x/2}$ and Ito's formula, we have
\[
E_x\Big[\int_0^{t\wedge T_0} B_s^2 e^{-B_s}\,\dd s \Big] \le 4C'E_x[e^{-B_{t\wedge T_0}/2} - e^{-x/2}] \le 4C'.
\]
This yields the lemma.
\end{proof}

{\small

  }
\end{document}